\newtheorem{definition}{Definition}
\newtheorem{theorem}{Theorem}
\newtheorem{proposition}{Proposition}[section]
\newtheorem{corollary}[proposition]{Corollary}
\newtheorem{lemma}[proposition]{Lemma}
\theoremstyle{break} 
\newenvironment{proof}%
{{\par\noindent \bf Proof. \nobreak}}%
{\nobreak \removelastskip \nobreak \hfill $\Box$ \medbreak}
{{\par\noindent \bf Proof \nobreak}}%
{\nobreak \removelastskip \nobreak \hfill $\Box$ \medbreak}
{{\par\noindent \bf Proof lemma. \nobreak}}%
{\nobreak \removelastskip \nobreak \bf End proof lemma. \medbreak}
\newenvironment{remark}{\par \medskip \noindent {\bf Remark. }\nobreak}{\par \medskip}
\def\paragraph#1{{\bf #1\ }}
\newcommand{\RN}[1]{%
  \textup{\uppercase\expandafter{\romannumeral#1}}%
}
\newcommand{\expo}{\mathrm{e}}
\newcommand{\dd}{\mathrm{d}}
\newcommand{\R}{{\mathbb{R}}}
\newcommand{\cP}{\mathcal{P}}
\newcommand{\cD}{\mathcal{D}}
\newcommand{\cH}{\mathcal{H}}
\def\qed{\hfill$\square$\smallskip}
\title{The Bennati–Dragulescu–Yakovenko model in the continuous setting: PDE derivation and long-time behavior}
\author{Fei Cao \footnotemark[1] \and Nadia Loy \footnotemark[2]}
\begin{document}
\maketitle

\footnotetext[1]{Amherst College - Department of Mathematics, Amherst, MA 01002, USA}
\footnotetext[2]{Politecnico di Torino - Department of Mathematical Sciences, Corso Duca degli Abruzzi, 24, 10129 Torino, Italy}

\tableofcontents

\begin{abstract}
In this manuscript, we develop and analyze a continuous version of the well-known Bennati–Dragulescu–Yakovenko (BDY) dollar-exchange discrete model. 
Starting from the conservative BDY exchange mechanism, we rely on kinetic theory for multi-agent systems in order to propose an analogue continuous dynamics, which does not belong to the class of other popular kinetic models for wealth exchange. We employ the quasi-invariant limit procedure to rigorously derive a nonlinear PDE on the half-line, which is a Fokker-Planck equation featuring the boundary value in the drift term. The PDE is supplemented with a nonlinear Robin-type boundary condition encoding conservation of total agents and wealth. We prove existence and uniqueness of the solution, which converges in relative entropy to the unique stationary state that is the Boltzmann–Gibbs (exponential) distribution. We determine the $L^1$ convergence (up to subsequences) of the solution toward this equilibrium: this requires us to make a step forward with respect to established arguments of entropy decay for Fokker-Planck equations. Thus, our results, which bridge the discrete stochastic dynamics with a continuous deterministic evolution equation, provide a novel and influential wealth exchange model in a PDE framework, which opens up many new applicative scenarios and methodological analytical challenges.
\end{abstract}


\noindent {\bf Key words: Agent-based model, Econophysics, Multi-agent dynamics, Quasi-invariant limit, Partial differential equations}

\section{Introduction}\label{sec:sec1}
\setcounter{equation}{0}


In this paper, we propose and investigate a continuous analogue of the famous Bennati-Dragulescu-Yakovenko (BDY) wealth exchange model \cite{dragulescu_statistical_2000}, which serves as a fundamental and pioneering model in the econophysics literature \cite{garibaldi_statistical_2007,scalas_statistical_2006}. In the classical BDY exchange model, there are $N$ distinct agents labeled $1$ through $N$, each described by the number of dollars they hold. Let $X^i_t$ denote the wealth of agent $i$ at time $t$. The model prescribes a simple exchange mechanism within a closed economy \cite{cao_derivation_2021,cao_interacting_2024,lanchier_rigorous_2017}: at random times (governed by an exponential distribution), one agent $i$ is chosen uniformly at random to give a dollar to another randomly selected agent $j$. If the chosen agent $i$ has no money ($S_i=0$), the event is void and no transfer of wealth occurs. The BDY dynamics can be summarized as follows:

\begin{equation}\label{dynamics:BDY}
\textbf{BDY model:} \qquad (X^i,X^j)~ \begin{tikzpicture} \draw [->,decorate,decoration={snake,amplitude=.4mm,segment length=2mm,post length=1mm}]
(0,0) -- (.6,0); \node[above,red] at (0.3,0) {};\end{tikzpicture}~  (X^i-1,X^j+1) \quad (\text{if } X^i \geq 1).
\end{equation}
It is readily seen from the aforementioned set-up that the total wealth of the system is preserved at all times: no money is ever created or destroyed during the evolution of the game. Mathematically, this is expressed by
\begin{equation}\label{eq:preserved_sum}
\frac{1}{N}\left(X^1_t + \cdots + X^N_t\right) = \frac{1}{N}\left(X^1_0 + \cdots + X^N_0\right) \coloneqq \mu \quad \text{for all } t\geq 0,
\end{equation}
where $\mu > 0$ denotes the prescribed (initial) average wealth per agent.

The BDY model described above represents one of the earliest mathematically tractable frameworks in econophysics and has since become a reference point for subsequent rigorous developments \cite{cao_derivation_2021,lanchier_rigorous_2017}. Its defining feature is the unbiased interaction rule: each agent with positive wealth transfers one dollar at a constant rate, with the recipient chosen uniformly at random, so that no individual or subgroup is structurally favored. In this sense, the dynamics is termed the \emph{unbiased exchange model} in \cite{cao_derivation_2021,cao_interacting_2024} or equivalently the \emph{one-coin model} \cite{lanchier_rigorous_2017}. From a statistical physics perspective, interpreting agents as particles and pairwise exchanges between agents as binary collisions between particles, the wealth of an agent naturally corresponds to the velocity carried by a particle. Under this natural analogy, the BDY dynamics connects closely to interacting particle systems and has in fact been investigated within the framework of zero-range processes (with constant rates) \cite{graham_rate_2009,merle_cutoff_2019,morris_spectral_2006}. Natural generalizations of the classical BDY model have also been explored in several directions, including but not limited to
models with bank and debt \cite{cao_bias_2023,cao_uncovering_2022,lanchier_rigorous_2019,lanchier_distribution_2022}, models with probabilistic cheaters \cite{blom_hallmarks_2024,cao_mean_2025}, poor-biased or rich-biased exchange models \cite{cao_derivation_2021,cao_sticky_2025,cao_uniform_2024,miao_nonequilibrium_2024}.

From a broader perspective, these variants underscore the flexibility of the BDY framework and motivate the search for continuous analogues: by passing from discrete agent-based rules to nonlinear PDEs with suitable (Robin-type) boundary conditions, in this paper we aim to obtain a natural continuum description that retains the conservative and unbiased features of the original BDY exchange dynamics while enabling analytical study of large-time behavior.


It is well known in both the econophysics and zero-range process literature that, in the large population limit $N \to \infty$, the behavior of the wealth of any fixed agent $X_t^i$ under the mean-field BDY dynamics can be summarized as follows \cite{cao_derivation_2021,cao_interacting_2024,graham_rate_2009,merle_cutoff_2019,morris_spectral_2006}: Let ${\bf p}(t)=\big(p_0(t),p_1(t),\ldots,p_n(t),\ldots\big)$ be a time-evolving probability mass function on $\mathbb N$ in which $p_n(t)$ represents the fraction of agents (among a large pool of agents as the number of agents $N \to \infty$) having $n$ dollars at time $t$. Then its time evolution is governed by the following Boltzmann-type infinite system of nonlinear ODEs:
\begin{equation}\label{eq:law_limit_unbais}
p'_n = \left\{
    \begin{array}{ll}
      p_1-r\,p_0 & \quad \text{for } n=0, \\
      p_{n+1}+r\,p_{n-1}- (1+r\,)p_n & \quad \text{for } n \geq 1,
    \end{array}
  \right.
\end{equation}
where $r \coloneqq \sum_{n\geq 1} p_n$ represents the fraction of agents who are ``rich enough'' to give out a dollar. Moreover, assume that ${\bf p}(0) \in \mathcal{P}(\mathbb N)$ is a probability mass function on $\mathbb N$ with mean value $\mu > 0$, then the (classical) solution ${\bf p}(t)$ of the mean-field BDY ODE system \eqref{eq:law_limit_unbais} enjoys several fundamental properties, which we summarize below for the reader's convenience:
\begin{enumerate}[label=(\Alph*)]
\item The system \eqref{eq:law_limit_unbais} preserves the total probability mass and the mean value. In other words, ${\bf p}(t) \in \mathcal{P}(\mathbb N)$ is a probability mass function on $\mathbb N$ with mean value $\mu$ for all $t \geq 0$.
\item The geometric distribution ${\bf p}^*$, defined by $p^*_n = \frac{1}{1+\mu}\,\left(\frac{\mu}{1+\mu}\right)^n$ for $n\geq 0$, is the unique equilibrium solution to which ${\bf p}(t)$ converges (in the sense of relative entropy). Moreover, in the limit as $\mu \to \infty$, the geometric distribution ${\bf p}^*$ can be well-approximated by an exponential distribution with mean $\mu$.
\item The relative entropy from the solution ${\bf p}(t)$ to the geometric equilibrium solution ${\bf p}^*$, defined by \[\cH\left({\bf p}(t)\mid {\bf p}^*\right) \coloneqq \sum_{n\geq 0} p_n(t)\,\ln \frac{p_n(t)}{p^*_n},\] decreases monotonically with respect to time.
\end{enumerate}

The aim of the present work is to describe a continuous version  of the BDY model~\eqref{dynamics:BDY} and correspondingly of the mean-field nonlinear ODE system \eqref{eq:law_limit_unbais}.
To this aim, we shall resort to a kinetic approach. Based on the idea that an economic system composed by a sufficiently large
number of agents can be described using the laws of statistical mechanics
as it happens in a physical system composed of many interacting particles, kinetic theory has proved to be an efficient framework for the description of socio-economic phenomena, which can be modelled as multi-agent systems in which agents interact binarily according to universal rules~\cite{cao_fractal_2024,cao_iterative_2024,pareschi_interacting_2013}. The description of wealth distribution is one of the key applications~\cite{cao_explicit_2021,during_kinetic_2008,CPT, matthes_2008,matthes_steady_2008, torregrossa_wealth_2017,torregrossa_wealth_2018}, which has also been generalized to international markets with trades and migrations~\cite{bisi_2022,bisi_loy_2024}. These models are collision-like kinetic equations of Boltzmann type, which implement linear exchange dynamics, whereby two agents transfer a fraction of their respective wealth to one another.

In the kinetic framework, one of the crucial issues is the rigorous analysis of the impact of the microscopic interactions between agents on the characterization of their aggregate wealth distribution in the long-time. To this respect, one of the core issues is the derivation of Fokker-Planck type equations from Boltzmann-type collisional kinetic ones, which are integro-partial differential equations, and typically hardly tractable. This derivation can be carried out by means of the \textit{quasi-invariant limit technique}\cite{toscani_2006}, built upon the \textit{grazing collision limit}\cite{desvillettes_1992,desvillettes_2001,villani_1998} in gas dynamics, which relies on considering interactions which produce small changes and to analyze them on a suitable slow time scale which compensates for such smallness, thus allowing enough interactions to take place in order to observe the emerging aggregate trend~\cite{toscani_2006,pareschi_interacting_2013}. Fokker-Planck equations, which are more amenable to analytical investigations, allow more easily to determine the stationary asymptotic statistical profile of the wealth distribution. For example, the Fokker-Planck equation which can be derived from the kinetic description of linear exchange rules allows to show that the stationary state is a Gamma inverse distribution featuring fat power law tails which reproduce the inverse power law of wealth observed by Vilfredo Pareto\cite{during_kinetic_2008,CPT}. Another crucial problem in kinetic theory, is the decay to equilibrium, which is typically studied by analyzing the monotonicity of Lyapunov entropy functionals~\cite{furioli_2017}. This issue has been addressed for Fokker-Planck equations with both constant and non-constant diffusions, and with a linear drift~\cite{furioli_2017,loy_zanella_2021,torregrossa_wealth_2017}, but is still an open problem for general nonlinear Fokker-Planck equations.

As done, for example, in~\cite{CPT, torregrossa_wealth_2017,torregrossa_wealth_2018}, we shall consider a continuous wealth $v \in \R_+$ and a corresponding density of agents $f(v,t)$ with personal wealth $v \ge 0$ at time $t \ge 0$, but the microscopic dynamics does not rely on the classical linear exchange. In section~\ref{sec:sec2}, we derive, by means of the quasi-invariant limit, the following nonlinear PDE for the temporal evolution of $f(\cdot,t)$ subject to a nonlinear Robin-type boundary condition:
\begin{equation}\label{eq:mainPDE}
\begin{cases}
\partial_t f(v,t) = \partial_{vv} f(v,t) + f(0,t)\,\partial_v f(v,t), &~~ v > 0,~t\geq 0,\\
\partial_v f(v,t) + f^2(v,t) = 0, &~~ v = 0,~t\geq 0.
\end{cases}
\end{equation}
Heuristically speaking, the nonlinear PDE~\eqref{eq:mainPDE} can be regarded as the continuous and infinitesimal analogue of the classical mean-field BDY ODE system \eqref{eq:law_limit_unbais}, as we explain in detail in Section~\ref{sec:sec2}. From this point onward, we refer to the nonlinear PDE \eqref{eq:mainPDE} as the \textbf{Bennati–Dragulescu–Yakovenko (BDY) PDE}. The BDY PDE is a nonlinear Fokker-Planck equation, which features a constant diffusion but a time-varying drift coefficient which involves the boundary value $f(0,t)$ of the density itself. This relates to the importance of the action of the drift term on the boundary conditions, which has been shown in~\cite{feller_1951}. To the best of our knowledge, this represents a novel development in the kinetic theory of socio-economic systems, posing notable analytical challenges, in particular for the quantitative study of the entropy decay. Interestingly, like in~\cite{torregrossa_wealth_2017}, the regularity analysis for Fokker-Planck equations carried out by Le Bris and Lions in~\cite{lebris_lions_2008} does not apply to our equation.

In parallel to the list of key properties satisfied by the solution ${\bf p}(t)$ of the BDY ODE system \eqref{eq:law_limit_unbais}, we show in section~\ref{sec:sec2} that the solutions $f$ of the PDE \eqref{eq:mainPDE} also satisfy the analogue of properties (A)-(C).

The remainder of this manuscript is organized as follows: in section~\ref{subsec:sec2.1} we present a formal derivation of the PDE~\eqref{eq:mainPDE} using an appropriate scaling analysis and Taylor expansion. Section~\ref{subsec:sec2.2} provides a rigorous justification of the BDY PDE~\eqref{eq:mainPDE} by virtue of a quasi-invariant limit procedure commonly encountered in the kinetic theory for multi-agent systems. The large-time behavior of solutions $f(\cdot,t)$ to~\eqref{eq:mainPDE} is examined in sections~\ref{subsec:sec3.1}--\ref{subsec:sec3.2}. There we show that the evolution dissipates the relative entropy with respect to its unique equilibrium $f^\infty$ (an exponential density). In particular, section~\ref{subsec:sec3.2} establishes a large-time convergence guarantee of $f(\cdot,t)$ to $f^\infty$ in $L^1(\mathbb{R}_+)$, at least along a sequence of times diverging to infinity. Section~\ref{subsec:sec3.3} turns to the linearized dynamics and proves a quantitative exponential decay estimate in a suitable weighted $L^2$ space. Finally, section \ref{sec:sec4} concludes the manuscript by outlining several potential directions for future research, building upon the contributions made in this work.

\section{Derivation of the BDY PDE}\label{sec:sec2}
\setcounter{equation}{0}

\subsection{A collision-like kinetic equation}\label{subsec:sec2.1}
This subsection is dedicated to the formal derivation of the nonlinear PDE \eqref{eq:mainPDE}. First we recall that the dynamics of the agent-based BDY model involve selecting two agents uniformly at random, with one agent transferring one dollar (if possible) to the other. Our derivation is split into two steps. The first step consists in constructing a continuous analogue of the basic BDY microscopic mechanism~\eqref{dynamics:BDY} which allows agents to trade a (potential tiny) amount $\varepsilon > 0$ of dollars (if feasible) in each binary transaction, and which permits the wealth of agents to vary continuously in the interval $[0,\infty)$.  Due to the nature of this microscopic dynamics, a suitable framework for defining a mesoscopic description which naturally incorporates such binary microscopic interactions is represented by Boltzann-type kinetic equations for binary ``collisions'' or interactions~\cite{pareschi_interacting_2013}. As a second step, we shall derive a Fokker-Planck equation from the Boltzmann-type equation.

First, we start by considering a collision-like kinetic approach, in which two agents $v$ and $w$ may interact and exchange money. The post-transaction wealth which mimics~\eqref{dynamics:BDY} is given by
\begin{equation}\label{micro.rule}
\begin{cases}
v'&= v-\varepsilon,~~{\rm if }~ v \geq \varepsilon,\\
w'&= w+\varepsilon,
\end{cases}
\end{equation}
while $v'=v$ and $w'=w$ if $v < \varepsilon$, i.e., if the agent $v$ has not enough money, the exchange does not take place. The microscopic rule~\eqref{micro.rule} is linear, but differs from the classical linear exchange~\cite{CPT}, as the first agent gives an absolute amount $\varepsilon$, and not a portion of its own wealth.
The asymmetric rule~\eqref{micro.rule} conserves the average amount of money within an exchange since
\begin{equation}
v'+w' = v+w,
\end{equation}
while it does not conserve the second moment for a generic $\varepsilon>0$:
\begin{equation}
v'^2+w'^2 =v^2+w^2 +2\,\varepsilon\,\left(\varepsilon-(v+w)\right).
\end{equation}
Let us now introduce $f_\varepsilon: \R_+\times \R_+ \to \R_+, (v,t) \to f_\varepsilon(v,t)$, which represents the density of the wealth $v$ of a typical agent at time $t$.
The binary microscopic dynamics~\eqref{micro.rule}, which we consider to happen with a frequency $\lambda$, can be described by a Boltzmann-type collisional equation whose weak form reads as
\begin{equation}\label{eq:Boltz.coll}
\dfrac{\dd}{\dd t} \int_{\R_+} f_\varepsilon(v,t)\, \varphi(v) \, \dd v = \left\langle Q_\varepsilon(f_\varepsilon,f_\varepsilon),\varphi \right\rangle,
\end{equation}
in which
\begin{equation}\label{Boltz1}
\begin{split}
 \left\langle Q_\varepsilon(f_\varepsilon,f_\varepsilon),\varphi \right\rangle &= \dfrac{\lambda}{2}\int_{\R_+} \int_{\R_+} B_\varepsilon(v)\, \left[\varphi(v')-\varphi(v)\right]\,  f_\varepsilon(v,t)\,  f_\varepsilon(w,t) \, {\rm d} v \,{\rm d} w\\
&\phantom{=}+\dfrac{\lambda}{2}\int_{\R_+} \int_{\R_+} B_\varepsilon(v)\, \left[\varphi(w')-\varphi(w)\right]\,  f_\varepsilon(v,t)\,  f_\varepsilon(w,t) \, {\rm d} v\, {\rm d} w.
\end{split}
\end{equation}
Here $\varphi$ is a test function of the observable $v$, and the two terms on the right-hand side of \eqref{Boltz1} take into account the asymmetry of the binary interaction~\eqref{micro.rule}. The function $B_\varepsilon$ is the \textit{interaction kernel}, which discriminates whether the exchange takes place or not within a binary interaction, and it is given by
\begin{equation}\label{kernel}
B_\varepsilon(v) \coloneqq \mathbbm{1}\{v\geq \varepsilon\}.
\end{equation}
The operator~\eqref{Boltz1} relies on an \textit{assumption} of propagation of chaos \cite{sznitman_topics_1991}, motivated by the propagation of chaos in the discrete model \cite{cao_derivation_2021,cao_interacting_2024,graham_rate_2009,merle_cutoff_2019}.
Inserting \eqref{micro.rule} into \eqref{Boltz1}, we obtain
\begin{equation}\label{Boltz2}
\begin{split}
 \left\langle Q_\varepsilon(f_\varepsilon,f_\varepsilon),\varphi \right\rangle &= \dfrac{\lambda}{2}\int_{\R_+} \int_{\R_+} \left[\varphi(v-\varepsilon)-\varphi(v)\right]\,\mathbbm{1}\{v\geq \varepsilon\}\, f_\varepsilon(v,t)\,f_\varepsilon(w,t) \, {\rm d} v
  \,{\rm d} w\\
&\phantom{=}+\dfrac{\lambda}{2}\int_{\R_+} \int_{\R_+} \left[\varphi(w+\varepsilon)-\varphi(w)\right]\,\mathbbm{1}\{v\geq \varepsilon\}\, f_\varepsilon(v,t)\,  f_\varepsilon(w,t) \, {\rm d} v\, {\rm d} w.
\end{split}
\end{equation}
A routine change of variables in~\eqref{Boltz2} leads us to
\begin{equation}\label{Boltz3}
\begin{split}
\left\langle Q_\varepsilon(f_\varepsilon,f_\varepsilon),\varphi \right\rangle &= \dfrac{\lambda}{2}\int_{\R_+} \varphi(v)\,\left[f_\varepsilon(v+\varepsilon,t)-f_\varepsilon(v,t)\,\mathbbm{1}\{v\geq \varepsilon\}\right] \, {\rm d} v\\
&\phantom{=}+\dfrac{\lambda}{2}\,r[f_\varepsilon](t)\,\int_{\R_+} \varphi(w)\,\left[f_\varepsilon(w-\varepsilon,t)\,\mathbbm{1}\{w\geq \varepsilon\}-f_\varepsilon(w,t)\right] \, {\rm d} w,
\end{split}
\end{equation}
where
\begin{equation}\label{eq:proportion_of_rich}
r[f_\varepsilon](t) \coloneqq \int_{\varepsilon}^\infty f_\varepsilon(v,t)\,\dd v  
\end{equation}
represents the proportion of agents who are ``wealthy enough'' to give $\varepsilon$ dollars in a binary exchange event at time $t$. For future purposes, we remark that for $\varepsilon$ small enough, we can Taylor expand $f_\varepsilon$ and obtain the following approximation
\begin{equation}\label{eq:Taylor2}
\begin{aligned}
r[f_\varepsilon](t) = \int_{\varepsilon}^\infty f_\varepsilon(v,t)\,\dd v &= 1-\int_0^{\varepsilon} f_\varepsilon(v,t)\,\dd v\\
&= 1 - \varepsilon\,f_\varepsilon(0,t) - \frac{\varepsilon^2}{2}\,\partial_v f_\varepsilon(\bar{v}_\varepsilon,t), 
\end{aligned}
\end{equation}
where $\bar{v}_\varepsilon = \alpha \varepsilon$ for some $\alpha \in (0,1)$. Therefore, the strong form of equation~\eqref{Boltz3} is given by the following nonlinear PDE:
\begin{equation}\label{eq:epsilon_BDY}
\partial_t f_\varepsilon(v,t) = \dfrac{\lambda}{2}\,\Big[f_\varepsilon(v+\varepsilon,t)- r[f_{\varepsilon}]\,f_\varepsilon(v,t) + \big(r[f_\varepsilon]\,f_\varepsilon(v-\varepsilon,t)\, - f_\varepsilon(v,t)\big)\mathbbm{1}\{v\geq \varepsilon\} \Big].
\end{equation}
For convenience, we refer to the PDE \eqref{eq:epsilon_BDY} as the \textbf{$\varepsilon$-BDY PDE}.

Let us now introduce, for each $s \in \mathbb N_+$, the space
\[\mathcal{P}_s(\R_+) \coloneqq \left\{\nu\in\mathcal{P}(\R_+)\,:\,\int_{\R_+}|v|^s\,\dd \nu(v) < +\infty\right\},\]
where $\mathcal{P}(\R_+)$ denotes the space of probability measures on $\R_+$. We denote the $n$-th (raw) moment of $f$ by
\begin{equation}
M_n(t) \coloneqq \int_{\R_+} v^n\, f(v,t) \, {\rm d} v,
\end{equation}
then the space $\mathcal{P}_s(\R_+)$ contains probability densities on $\R_+$ having bounded moments up to order $s$.
We now state some elementary observations regarding solutions of the $\varepsilon$-BDY PDE \eqref{eq:epsilon_BDY}.
\begin{lemma}\label{lem:epsilon_BDY}
Let $\varepsilon > 0$ be fixed. Assume that $f_\varepsilon$ is a solution to the $\varepsilon$-BDY PDE \eqref{eq:epsilon_BDY} or equivalently to its weak form~\eqref{eq:Boltz.coll}-\eqref{Boltz1}-\eqref{kernel}, starting from an initial condition $f^0_\varepsilon \in \mathcal{P}_1(\mathbb{R}_+)$, which is a smooth probability density with unitary mass and a prescribed mean value $\mu > 0$. Then, the $\varepsilon$-BDY PDE preserves both the total probability mass and the mean value, i.e.,
\begin{equation*}
\frac{\dd}{\dd t} \int_0^\infty f_\varepsilon(v,t)\,\dd v = 0 \quad \textrm{and} \quad \frac{\dd}{\dd t} \int_0^\infty v\,f_\varepsilon(v,t)\,\dd v = 0.
\end{equation*}
Therefore, $f_\varepsilon(\cdot,t) \in L^1(\R_+)$ for all $t\ge 0$ and $||f_\varepsilon(\cdot,t)||_{L^1}\equiv 1$.
Furthermore, the Boltzmann-Gibbs (exponential) distribution defined by
\begin{equation}\label{eq:BG}
f^\infty(v) = \frac{\expo^{-\frac{v}{\mu}}}{\mu}~~\textrm{for each $v \in \mathbb{R}_+$},
\end{equation}
is the unique equilibrium solution of \eqref{eq:epsilon_BDY}.

\noindent Moreover, for sufficiently small $\varepsilon$, if the initial condition $f^0_\varepsilon\in \mathcal{P}_3(\R_+)$ and $f_\varepsilon(0,t)$ is bounded in time, then $f_\varepsilon \in \mathcal{P}_3(\R_+)$.
\end{lemma}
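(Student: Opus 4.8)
The plan is to establish the four assertions of Lemma~\ref{lem:epsilon_BDY} in order, exploiting the weak form~\eqref{eq:Boltz.coll}--\eqref{Boltz1} for the conservation laws and the strong form~\eqref{eq:epsilon_BDY} for the equilibrium and moment statements. For mass and mean conservation, the cleanest route is to test the weak form against $\varphi(v) \equiv 1$ and $\varphi(v) = v$ respectively. With $\varphi \equiv 1$ the bracketed differences $\varphi(v')-\varphi(v)$ and $\varphi(w')-\varphi(w)$ vanish identically, so $\frac{\dd}{\dd t}\int_0^\infty f_\varepsilon\,\dd v = 0$. With $\varphi(v) = v$ the two increments become $v'-v = -\varepsilon\,\mathbbm{1}\{v\geq\varepsilon\}$ and $w'-w = +\varepsilon$, and the key point is that the interaction kernel $B_\varepsilon(v) = \mathbbm{1}\{v\geq\varepsilon\}$ multiplies \emph{both} terms; thus the $-\varepsilon$ contribution from the giver and the $+\varepsilon$ contribution from the receiver are weighted by the same indicator and cancel exactly after integrating against $f_\varepsilon(w,t)$, yielding $\frac{\dd}{\dd t}M_1 = 0$. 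Together these give $f_\varepsilon(\cdot,t)\in L^1(\R_+)$ with unit mass for all $t\geq 0$.

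For the equilibrium claim, I would substitute the Boltzmann--Gibbs density $f^\infty(v) = \mu^{-1}\expo^{-v/\mu}$ into the right-hand side of the strong form~\eqref{eq:epsilon_BDY} and verify it vanishes. The decisive algebraic feature is that the exponential satisfies $f^\infty(v\pm\varepsilon) = \expo^{\mp\varepsilon/\mu}f^\infty(v)$, so the shifted terms are scalar multiples of $f^\infty(v)$ itself. One then checks that the bracket collapses on each of the two regions $\{v\geq\varepsilon\}$ and $\{0\leq v<\varepsilon\}$: on the first region the coefficient reduces to $\expo^{-\varepsilon/\mu} - r + r\,\expo^{\varepsilon/\mu}\cdot\expo^{-2\varepsilon/\mu}\cdots$—more precisely, pairing the gain/loss terms and using that at equilibrium $r = r[f^\infty]$ is a fixed constant, the combination of $\expo^{-\varepsilon/\mu}$ and $\expo^{+\varepsilon/\mu}$ factors balances to zero. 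For uniqueness, rather than arguing directly at the $\varepsilon$ level, I would invoke property~(B) transplanted to this setting via the relative-entropy/Lyapunov structure promised later in the manuscript, or note that any stationary solution must be a fixed point of the discrete-difference operator with the prescribed mass and mean, which pins down the geometric/exponential profile uniquely.

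The moment-propagation statement is the substantive part and the expected main obstacle. The plan is to test the weak form against $\varphi(v) = v^3$ and derive a differential inequality for $M_3(t)$. Expanding $(v-\varepsilon)^3 - v^3 = -3\varepsilon v^2 + 3\varepsilon^2 v - \varepsilon^3$ on the giver side (restricted to $\{v\geq\varepsilon\}$) and $(w+\varepsilon)^3 - w^3 = 3\varepsilon w^2 + 3\varepsilon^2 w + \varepsilon^3$ on the receiver side, integrating against $f_\varepsilon\otimes f_\varepsilon$, and collecting powers of $\varepsilon$, the leading $M_3$-coefficients cancel by the same gain/loss symmetry used for mass and mean, leaving lower-order moments $M_2$, $M_1$, boundary contributions, and the truncation discrepancy near $v<\varepsilon$. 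The difficulty is controlling the terms generated by the indicator truncation on $\{0\leq v<\varepsilon\}$ and the appearance of the boundary value $f_\varepsilon(0,t)$, which is exactly where the hypotheses ``$\varepsilon$ sufficiently small,'' ``$f^0_\varepsilon\in\mathcal{P}_3(\R_+)$,'' and ``$f_\varepsilon(0,t)$ bounded in time'' enter. Using the Taylor estimate~\eqref{eq:Taylor2} to control $r[f_\varepsilon]$ and the boundedness of $f_\varepsilon(0,t)$ to bound the truncated contributions, I expect to arrive at a closed inequality of the form $\frac{\dd}{\dd t}M_3(t) \leq C_1\,M_3(t) + C_2$ with constants depending on $\varepsilon$, the prescribed mean $\mu$, and $\sup_t f_\varepsilon(0,t)$. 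Grönwall's inequality then yields a time-local-to-global bound showing $M_3(t)<\infty$ for all $t\geq 0$, i.e., $f_\varepsilon(\cdot,t)\in\mathcal{P}_3(\R_+)$, completing the proof.
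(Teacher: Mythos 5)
Your proposal follows essentially the same route as the paper: mass and mean conservation by testing the weak form with $\varphi\equiv 1$ and $\varphi(v)=v$, the equilibrium by direct substitution into the strong form, and moment propagation by testing with higher monomials and using the boundedness of $f_\varepsilon(0,t)$ to close the resulting moment ODEs (the paper computes $\frac{\dd}{\dd t}M_2^\varepsilon$ and $\frac{\dd}{\dd t}M_3^\varepsilon$ explicitly to order $\varepsilon^2$ and concludes finiteness, which is the same Gr\"onwall-type closure you describe). The only spot to tighten is the stationarity check, where your algebra trails off: on $\{v\geq\varepsilon\}$ the bracket reduces to $f^\infty(v)\left[\left(\expo^{-\varepsilon/\mu}-1\right)+r[f^\infty]\left(\expo^{\varepsilon/\mu}-1\right)\right]$ and on $\{0\leq v<\varepsilon\}$ to $f^\infty(v)\left[\expo^{-\varepsilon/\mu}-r[f^\infty]\right]$, both of which vanish precisely because $r[f^\infty]=\int_\varepsilon^\infty f^\infty(v)\,\dd v=\expo^{-\varepsilon/\mu}$.
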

\begin{proof}
Straightforward computations (by setting $\varphi(v) \equiv 1$ and $\varphi(v) = v$ respectively in~\eqref{Boltz1}) show that the zero-th and first moments of $f_\varepsilon$ are conserved in time. In addition, direct computations also show that~\eqref{eq:BG} is the unique stationary solution of~\eqref{eq:epsilon_BDY}.

If we consider the second moment of $f_\varepsilon$
\begin{equation*}
M_2^\varepsilon(t) \coloneqq \int_{\R_+} v^2\,f_\varepsilon(v,t)\, {\rm d} v,
\end{equation*}
then inserting $\varphi(v)=v^2$ in~\eqref{Boltz1} gives rise to
\[
\dfrac{\dd}{\dd t}M_2^\varepsilon(t) = \lambda\, \varepsilon^2\, r[f_\varepsilon](t) +\lambda\, \varepsilon\,\left(\mu\, r[f_\varepsilon](t)-r_1[f_\varepsilon](t)\right),
\]
where
\begin{equation}\label{eq:proportion_of_rich_1}
r_1[f_\varepsilon](t) \coloneqq \int_{\varepsilon}^\infty v f_\varepsilon(v,t)\,\dd v,
\end{equation}
which is a non-negative quantity. Therefore, as we expected from the microscopic dynamics, $M_2^\varepsilon$ is not conserved in general for every $\varepsilon >0$ (it is conserved only in the limit $\varepsilon \to 0$), and a priori, it might be both decreasing or increasing.  By Taylor expanding~\eqref{eq:proportion_of_rich_1}, we obtain $r_1[f_\varepsilon](t) = \mu- \frac{\varepsilon^2}{2}\,f_\varepsilon(0,t)+ \mathcal{O}(\varepsilon^3)$. Thus for $\varepsilon$ small enough, we have
\[
\dfrac{\dd}{\dd t} M_2^\varepsilon(t) = \lambda\,\varepsilon^2\,\left(1-\mu f_\varepsilon(0,t)\right) +\mathcal{O}(\varepsilon^3).
\]
If $f_\varepsilon(0,t)$ is bounded in time, we have that even if $M_2^\varepsilon$ was monotonically increasing, as the second moment of $f^\infty$ is $M_2^\infty = 2\,\mu^2$ (which is finite), thus if we assume that $f^0_\varepsilon \in \mathcal{P}_3(\R_+)$ so that its initial second moment is finite, then the second moment $M_2^\varepsilon(t)$ is finite for all $t>0$. We can argue similarly for $M_3^\varepsilon$ being its evolution approximated by
\[
\dfrac{\dd}{\dd t}M_3^\varepsilon(t) = \lambda\,\varepsilon^2\,\left(3\,\mu - \frac{3}{2}\,f_\varepsilon(0,t)\,M_2^\varepsilon(t)\right)+\mathcal{O}(\varepsilon^3),
\]
and knowing that $M_3^\infty= 6\mu^3$.
Hence, we conclude that $f_\varepsilon \in \mathcal{P}_3(\R_+)$.
\end{proof}

Although the primary focus of this manuscript is not on the detailed analysis of the $\varepsilon$-BDY PDE \eqref{eq:epsilon_BDY}, it provides a crucial starting point for the rigorous derivation of the BDY PDE \eqref{eq:mainPDE}. Indeed, the main idea is to send $\varepsilon \to 0$ and show that an appropriately scaled version of the solution $f_{\varepsilon}$ to \eqref{eq:epsilon_BDY} converges to the solution of the BDY PDE \eqref{eq:mainPDE}.

\begin{proposition}[Formal asymptotic PDE as $\varepsilon \to 0$]\label{prop:quasi-invariant}
Assume that $f_\varepsilon$ is a classical solution to the $\varepsilon$-BDY PDE \eqref{eq:epsilon_BDY}, with initial condition $f^0_\varepsilon \in \mathcal{P}_1(\mathbb{R}_+)$ being a smooth probability density with a prescribed mean value $\mu > 0$. Then as $\varepsilon \to 0$, $f_\varepsilon(v,t/\varepsilon^2)$ converges to the solution $f$ of the BDY PDE \eqref{eq:mainPDE}.
\end{proposition}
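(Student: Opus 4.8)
Since the claim is a \emph{formal} asymptotic identification, my plan is to carry out a matched asymptotic (quasi-invariant) expansion in $\varepsilon$, treating separately the interior region $\{v \geq \varepsilon\}$ and the boundary layer $[0,\varepsilon]$ where the interaction kernel \eqref{kernel} switches off, and to read off both the limiting equation and its boundary condition by balancing powers of $\varepsilon$. Throughout I would lean on Lemma \ref{lem:epsilon_BDY} (mass conservation, $\|f_\varepsilon(\cdot,t)\|_{L^1}\equiv 1$) and on the expansion \eqref{eq:Taylor2}, written as $r[f_\varepsilon](t) = 1 - \varepsilon\,f_\varepsilon(0,t) + \mathcal{O}(\varepsilon^2)$. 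The two recurring tools are the Taylor expansions $f_\varepsilon(v\pm\varepsilon,t) = f_\varepsilon(v,t) \pm \varepsilon\,\partial_v f_\varepsilon(v,t) + \tfrac{\varepsilon^2}{2}\,\partial_{vv}f_\varepsilon(v,t) + \mathcal{O}(\varepsilon^3)$ for the strong form \eqref{eq:epsilon_BDY}, and the analogous expansions of $\varphi(v-\varepsilon)-\varphi(v)$ and $\varphi(w+\varepsilon)-\varphi(w)$ for the weak form \eqref{Boltz2}.

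For the interior equation I would work with the strong form \eqref{eq:epsilon_BDY} at fixed $v>0$, where for $\varepsilon<v$ the indicator equals $1$, so that $\partial_t f_\varepsilon = \tfrac{\lambda}{2}\big[f_\varepsilon(v+\varepsilon) - r[f_\varepsilon]\,f_\varepsilon(v) + r[f_\varepsilon]\,f_\varepsilon(v-\varepsilon) - f_\varepsilon(v)\big]$. Inserting the Taylor expansion collapses the bracket to $\varepsilon\,(1-r[f_\varepsilon])\,\partial_v f_\varepsilon + \tfrac{\varepsilon^2}{2}(1+r[f_\varepsilon])\,\partial_{vv}f_\varepsilon + \mathcal{O}(\varepsilon^3)$. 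The decisive cancellation is that the nominal $\mathcal{O}(\varepsilon)$ transport term is in fact $\mathcal{O}(\varepsilon^2)$, because $1 - r[f_\varepsilon] = \varepsilon\,f_\varepsilon(0,t) + \mathcal{O}(\varepsilon^2)$ by \eqref{eq:Taylor2}; together with $1 + r[f_\varepsilon] = 2 + \mathcal{O}(\varepsilon)$ this gives $\partial_t f_\varepsilon = \tfrac{\lambda}{2}\,\varepsilon^2\big[\partial_{vv}f_\varepsilon + f_\varepsilon(0,t)\,\partial_v f_\varepsilon\big] + \mathcal{O}(\varepsilon^3)$. This $\varepsilon^2$ scaling is exactly what selects the slow (diffusive) time $t \mapsto t/\varepsilon^2$: setting $\tilde f_\varepsilon(v,\tau) \coloneqq f_\varepsilon(v,\tau/\varepsilon^2)$ multiplies the time derivative by $\varepsilon^{-2}$, and upon choosing $\lambda = 2$ (equivalently, absorbing the constant $\tfrac{\lambda}{2}$ into the slow time) the limit $\varepsilon \to 0$ yields the interior equation $\partial_\tau f = \partial_{vv}f + f(0,\tau)\,\partial_v f$ of \eqref{eq:mainPDE}.

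The step I expect to be genuinely delicate — the main obstacle — is the boundary condition, which lives entirely in the layer $[0,\varepsilon]$ and is invisible to the interior expansion. Here I would pass to the weak form \eqref{Boltz2}: expanding the two increments and integrating out the spectator variable via $\|f_\varepsilon\|_{L^1}=1$ reduces the collision term to $\tfrac{\lambda}{2}\int_\varepsilon^\infty[-\varepsilon\varphi'+\tfrac{\varepsilon^2}{2}\varphi'']f_\varepsilon\,\dd v + \tfrac{\lambda}{2}\,r[f_\varepsilon]\int_0^\infty[\varepsilon\varphi'+\tfrac{\varepsilon^2}{2}\varphi'']f_\varepsilon\,\dd v + \mathcal{O}(\varepsilon^3)$, the asymmetry of the two integration domains being precisely the footprint of the kernel. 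The crux is that one must \emph{not} replace $\int_\varepsilon^\infty$ by $\int_0^\infty$ naively: the honest identity $\int_\varepsilon^\infty \varphi' f_\varepsilon\,\dd v = \int_0^\infty \varphi' f_\varepsilon\,\dd v - \varepsilon\,\varphi'(0)\,f_\varepsilon(0,t) + \mathcal{O}(\varepsilon^2)$ feeds an extra contribution $\tfrac{\lambda}{2}\varepsilon^2\varphi'(0)f_\varepsilon(0,t)$ at exactly the retained order, and this is the term that encodes the boundary condition. Combining it with the $\mathcal{O}(\varepsilon)$ transport terms (which collapse to $-\tfrac{\lambda}{2}\varepsilon^2 f_\varepsilon(0,t)\int_0^\infty \varphi' f_\varepsilon\,\dd v$ as above) and the $\tfrac{\varepsilon^2}{2}\varphi''$ diffusion terms, after the rescaling $\tau = \varepsilon^2 t$ and with $\lambda=2$ the limiting weak formulation reads
\begin{equation*}
\frac{\dd}{\dd \tau}\int_0^\infty f\,\varphi\,\dd v = \varphi'(0)\,f(0,\tau) + \int_0^\infty \varphi''\,f\,\dd v - f(0,\tau)\int_0^\infty \varphi'\,f\,\dd v .
\end{equation*}
Integrating by parts twice, the $\varphi'(0)$ terms cancel and one is left with $\int_0^\infty \varphi\big[\partial_\tau f - \partial_{vv}f - f(0,\tau)\partial_v f\big]\,\dd v = \varphi(0)\big[\partial_v f(0,\tau) + f^2(0,\tau)\big]$; requiring this for all test functions $\varphi$ forces simultaneously the interior PDE and the Robin-type condition $\partial_v f(0,\tau) + f^2(0,\tau) = 0$. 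As an independent consistency check, writing the limit equation in divergence form $\partial_\tau f = \partial_v\big(\partial_v f + f(0,\tau)\,f\big)$ and imposing mass conservation (inherited from Lemma \ref{lem:epsilon_BDY}) together with decay as $v\to\infty$ forces the boundary flux to vanish, which is again $\partial_v f(0,\tau) + f^2(0,\tau) = 0$. Since the proposition asserts only the formal limit, I would stop at this identification and defer the rigorous convergence of $f_\varepsilon(\cdot,t/\varepsilon^2)$ toward $f$ to the quasi-invariant limit argument of Section \ref{subsec:sec2.2}.
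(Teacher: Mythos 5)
Your interior expansion is exactly the paper's: the same strong-form Taylor expansion of \eqref{eq:epsilon_BDY} for $v\geq\varepsilon$, the same cancellation $1-r[f_\varepsilon]=\varepsilon\,f_\varepsilon(0,t)+\mathcal{O}(\varepsilon^2)$ promoting the transport term to order $\varepsilon^2$, and the same identification of the diffusive time scale $t/\varepsilon^2$. Where you genuinely depart from the paper's proof of this proposition is the boundary condition. The paper stays in the strong form and examines the layer $0\leq v<\varepsilon$ directly: there the indicator vanishes, the right-hand side of \eqref{eq:epsilon_BDY} is only $\mathcal{O}(\varepsilon)$ with leading term $\varepsilon\left(\partial_v f_\varepsilon+f_\varepsilon(0,t)\,f_\varepsilon\right)$, and after the $1/\varepsilon^2$ rescaling this dominant balance forces $\partial_v f(0,t)+f^2(0,t)=0$ in the limit. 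You instead pass to the weak form \eqref{Boltz2}, track the mismatch between $\int_\varepsilon^\infty$ and $\int_0^\infty$ to extract the $\varepsilon^2\,\varphi'(0)\,f_\varepsilon(0,t)$ contribution, and recover the Robin condition by two integrations by parts; your computation is correct (I checked that the $\varphi'(0)$ terms cancel and the residual boundary term is $\varphi(0)\left[\partial_v f(0,\tau)+f^2(0,\tau)\right]$). This is essentially the route the paper itself takes later, in Section~\ref{subsec:sec2.2}, to arrive at \eqref{Boltz.approx2} and \eqref{eq:FP-weak.3}, so it is a legitimate and arguably more structural derivation — it makes the no-flux/mass-conservation origin of the boundary condition transparent, as your consistency check with the divergence form \eqref{eq:flux} confirms. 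What the paper's strong-form layer argument buys in exchange is a pointwise explanation of \emph{why} the Robin condition must hold: without it the rescaled equation blows up like $1/\varepsilon$ inside the layer. Either argument is acceptable for this formal statement, and your closing decision to defer rigor to the quasi-invariant limit of Section~\ref{subsec:sec2.2} matches the paper's intent.
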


\begin{proof}
For $0 < \varepsilon \ll 1$, we have
\begin{equation}\label{eq:Taylor1}
f_\varepsilon(v\pm \varepsilon,t) = f_\varepsilon(v,t) \pm \varepsilon\,\partial_v f_\varepsilon(v,t) + \frac{\varepsilon^2}{2}\,\partial_{vv} f_\varepsilon(v,t) + \mathcal{O}(\varepsilon^3).
\end{equation}
Therefore, thanks to the approximation~\eqref{eq:Taylor2}, for $v\geq \varepsilon$, the right side of \eqref{eq:epsilon_BDY} can be well-approximated by
\begin{equation*}
\begin{aligned}
&f_\varepsilon(v,t)+\varepsilon\,\partial_v f_\varepsilon(v,t)+\frac{\varepsilon^2}{2}\,\partial_{vv} f_\varepsilon(v,t)\\[0.2cm]
&\phantom{\qquad \qquad }+\left(1-\varepsilon\,f_\varepsilon(0,t)\right)\left(f_\varepsilon(v,t)-\varepsilon\,\partial_v f_\varepsilon(v,t) + \frac{\varepsilon^2}{2}\,\partial_{vv} f_\varepsilon(v,t)\right)+ \mathcal{O}(\varepsilon^3)\\[0.2cm]
&\qquad \qquad \qquad \qquad \qquad \qquad \qquad \qquad \qquad \qquad -f_\varepsilon(v,t)-\left(1-\varepsilon\,f_\varepsilon(0,t)\right)f_\varepsilon(v,t)\\[0.2cm]
&= \varepsilon^2\,\left(\partial_{vv} f_\varepsilon(v,t) + f_\varepsilon(0,t)\,\partial_v f_\varepsilon(v,t)\right)+ \mathcal{O}(\varepsilon^3).
\end{aligned}
\end{equation*}
On the other hand, for $0\leq v < \varepsilon$, the right side of \eqref{eq:epsilon_BDY} can be approximated by
\begin{equation*}
\begin{aligned}
&f_\varepsilon(v,t)+\varepsilon\,\partial_v f_\varepsilon(v,t)+\frac{\varepsilon^2}{2}\,\partial_{vv} f_\varepsilon(v,t) - \left(1-\varepsilon\,f_\varepsilon(0)\right)f_\varepsilon(v,t)\\[0.2cm]
&= \varepsilon\,\left(\partial_v f_\varepsilon(v,t) + f_\varepsilon(0,t)\,f_\varepsilon(v,t)\right) + \mathcal{O}(\varepsilon^2).
\end{aligned}
\end{equation*}
Consequently, as $\varepsilon \to 0$, $f_\varepsilon(v,t/\varepsilon^2)$ converges to the solution $f$ of the following nonlinear PDE with a nonlinear Robin-type boundary condition
\begin{equation*}
\begin{cases}
\partial_t f(v,t) = \partial_{vv} f(v,t) + f(0,t)\,\partial_v f(v,t), &~~v > 0,~t\geq 0,\\
\partial_v f(v,t) + f(0,t)\,f(v,t) = 0, &~~ v = 0,~t\geq 0.
\end{cases}
\end{equation*}
This completes the proof of Proposition \ref{prop:quasi-invariant}.
\end{proof}



\subsection{Rigorous derivation of~\eqref{eq:mainPDE}: the quasi-invariant limit}\label{subsec:sec2.2}

An alternative and more rigorous approach for the derivation of the BDY PDE~\eqref{eq:mainPDE}, which is a nonlinear Fokker-Planck equation, is the \emph{quasi-invariant limit} technique~\cite{toscani_2006,toscani_1999}. The quasi-invariant limiting procedure allows us to show that a subsequence of $f_\varepsilon$, which is a solution of a collision-like kinetic equation \eqref{eq:Boltz.coll}-\eqref{Boltz1}-\eqref{kernel}, converges (on a suitable time scale) to a solution of the Fokker-Planck type equation \eqref{eq:mainPDE}. This technique essentially relies on considering binary interactions which produce small changes and to analyze them on an appropriate long time scale which compensates the smallness of the interaction. In the microscopic rule~\eqref{micro.rule}, there is actually a small ``quasi-invariant'' exchange of money, which we can observe on a slower time scale in order to observe many small interactions accumulating. In particular, the proof of Proposition~\ref{prop:quasi-invariant} suggests that the appropriate time scale is $t/\varepsilon^2$.  In particular, instead of considering $f_\varepsilon(v,t/\varepsilon^2)$, we consider the equivalent high frequency regime/scaling defined by
\begin{equation}\label{def:freq.eps}
\lambda \longmapsto \dfrac{\lambda}{\varepsilon^2}.
\end{equation}
We now aim to derive by means of the quasi-invariant limit the nonlinear Fokker-Planck equation~\eqref{eq:mainPDE} from the collision-like description~\eqref{eq:Boltz.coll}-\eqref{Boltz1}. Following the procedure presented, e.g., in~\cite{torregrossa_wealth_2017}, we start from~\eqref{Boltz2}, which now reads as
\begin{equation}\label{Boltz2.resc}
\begin{split}
\dfrac{1}{\varepsilon^2}\left\langle Q_\varepsilon(f_\varepsilon,f_\varepsilon),\varphi\right\rangle &= \dfrac{\lambda}{2\,\varepsilon^2}\int_{\R_+} \int_{\R_+} \left[\varphi(v-\varepsilon)-\varphi(v)\right]\,\mathbbm{1}\{v\geq \varepsilon\}\, f_\varepsilon(v,t)\,f_\varepsilon(w,t)\,{\rm d} v\, {\rm d} w\\
&+\dfrac{\lambda}{2\,\varepsilon^2}\int_{\R_+} \int_{\R_+} \left[\varphi(w+\varepsilon)-\varphi(w)\right]\,\mathbbm{1}\{v\geq \varepsilon\}\, f_\varepsilon(v,t)\,  f_\varepsilon(w,t) \, {\rm d}v {\rm d} w.
\end{split}
\end{equation}
For $m \in \mathbb{N}_+$, let $\mathcal{C}^m(\R_+)$ be the set of $m$ times continuously differentiable functions, endowed with its natural norm $||\cdot||_m$:
\[
{\displaystyle \|\varphi\|_m} \coloneqq {\begin{cases}\sup\limits_{v \in \R_+ }|\varphi(v)|&{\text{ if }}~m=0,\\ \|\varphi\|_0+\displaystyle\sum _{k=1}^{m}\left\| \dfrac{
\dd^k \varphi}{\dd v^k}\right\|_0 &{\text{ if }}~m \geq 1.\end{cases}}
\]
As the domain $\R_+$ is unbounded, we consider the following set of test functions
\begin{equation}
\mathcal{D}\coloneqq \left\{ \varphi \in \mathcal{C}^m(\R_+) \mid ||\varphi||_m \leq 1\right\}.
\end{equation}
Then a Taylor expansion for $\varphi(v\pm \varepsilon)$ around $v$ for small enough $\varepsilon$ leads us to
\begin{equation*}
\begin{split}
\dfrac{1}{\varepsilon^2}\left\langle Q_\varepsilon(f_\varepsilon,f_\varepsilon),\varphi \right\rangle &= \dfrac{\lambda}{2\,\varepsilon^2} \int_{\R_+} \left[-\varepsilon\,\varphi'(v)+\dfrac{\varepsilon^2}{2}\,\varphi''(v)-\dfrac{\varepsilon^3}{6}\varphi'''(\bar{v}^-)\right]\mathbbm{1}\{v\geq \varepsilon\}\, f_\varepsilon(v,t)\,{\rm d} v \\
&\phantom{=}+\dfrac{\lambda}{2\,\varepsilon^2}\,r[f_\varepsilon]\,\int_{\R_+} \left[\varepsilon\, \varphi'(w)+\dfrac{\varepsilon^2}{2}\,\varphi''(w)+\dfrac{\varepsilon^3}{6}\,\varphi'''(\bar{w}^+)\right] f_\varepsilon(w,t) \, {\rm d} w,
\end{split}
\end{equation*}
in which $\bar{v}^- = v - \alpha_{-}\,\varepsilon \in (v-\varepsilon, v)$ and $\bar{w}^+ = w + \alpha_{+}\,\varepsilon \in (w,w+\varepsilon)$
for some $\alpha_{\pm}\in (0,1)$.
Then, taking into account equation~\eqref{eq:Taylor2}, we obtain
\begin{equation}\label{Boltz.approx}
\begin{split}
\dfrac{1}{\varepsilon^2}\left\langle Q_\varepsilon(f_\varepsilon,f_\varepsilon),\varphi \right\rangle =& -\dfrac{\lambda}{2}\,f_\varepsilon(0,t)\int_{\R_+} \varphi'(v)\, f_\varepsilon(v,t) \, {\rm d} v +\dfrac{\lambda}{2}\int_{\R_+} \varphi''(v)\, f_\varepsilon(v,t) \, {\rm d} v\\
&+\dfrac{\lambda}{2\,\varepsilon^2}\left( \int_{0}^\varepsilon \varepsilon\, \varphi'(v)\,f_\varepsilon(v,t)\,{\rm d} v - \int_{0}^\varepsilon  \dfrac{\varepsilon^2}{2}\,\varphi''(v)\, f_\varepsilon(v,t)  \, {\rm d} v\right)\\
&+\dfrac{\lambda}{2\varepsilon^2}\left(\int_{\R_+} \left(-\frac{\varepsilon^2}{2}\,\partial_v f_\varepsilon(\bar{v}_\varepsilon,t)\,\varepsilon\,\varphi'(v)-\varepsilon\, f_\varepsilon(0,t)\,\frac{\varepsilon^2}{4}\,\varphi''(v)\right) f_\varepsilon(v,t) \, {\rm d}v  \right.\\
&\left.\phantom{\int_{\R_+} -\varphi'(v)\varepsilon\partial_v f_\varepsilon(\bar{v}_-)}+\dfrac{\varepsilon^3}{6} \left(\varphi'''(\bar{w}^+)-\varphi'''(\bar{v}^-)\right)r[f_\varepsilon]  \right),
\end{split}
\end{equation}
which can be rewritten as
\begin{equation}\label{Boltz.approx2}
\begin{split}
\dfrac{1}{\varepsilon^2}\left\langle Q_\varepsilon(f_\varepsilon,f_\varepsilon),\varphi \right\rangle  =&  -\dfrac{\lambda}{2}\,f_\varepsilon(0,t)\int_{\R_+} \varphi'(v)\, f_\varepsilon(v,t) \, {\rm d} v +\dfrac{\lambda}{2}\int_{\R_+} \varphi''(v)\,f_\varepsilon(v,t) \, {\rm d} v\\
&+\dfrac{\lambda}{2}\,\varphi'(0)\,f_\varepsilon(0,t)+\int_{\R_+}\mathcal{R}_\varepsilon\left(\varphi(v),t\right) f_\varepsilon(v,t) \, {\rm d}v.
\end{split}
\end{equation}
The first two terms on the right-hand side of \eqref{Boltz.approx2} will give rise to the classical drift and diffusion terms of the desired Fokker-Planck equation, while the third term takes into account of the boundary layer, which will allow us to determine the correct boundary conditions, in order to ensure certain conservation properties. The quantity $\mathcal{R}_\varepsilon$ denotes the remainder, which is given by
\begin{equation}\label{def:remainder}
\begin{split}
\int_{\R_+ }\mathcal{R}_\varepsilon\left(\varphi(v),t\right) &f_\varepsilon(v,t)\,{\rm d} v \coloneqq  \\
&\dfrac{\lambda}{2}\left(\frac{\varepsilon}{2}\left(\varphi'(v)\,f_\varepsilon(v,t)\right)'\big\vert_{v=\tilde{v}_\varepsilon}-\int_{0}^\varepsilon  \dfrac{1}{2}\,\varphi''(v)\, f_\varepsilon(v,t)  \, {\rm d} v \right.\\
&\left.-\partial_v f_\varepsilon(\bar{v}_\varepsilon,t)\int_{\R_+} \frac{\varepsilon}{2}\,\varphi'(v)\,f_\varepsilon(v,t) \, {\rm d}v-\int_{\R_+}\varepsilon\, f_\varepsilon(0,t)\,\frac{1}{4}\,\varphi''(v)\,f_\varepsilon(v,t) \, {\rm d}v \right.\\
&+\left.\dfrac{\varepsilon}{6} \left(\varphi'''(\bar{w}^+)-\varphi'''(\bar{v}^-)\right)r[f_\varepsilon]  \right)+\mathcal{O}(\varepsilon^2),
\end{split}
\end{equation}
where $\tilde{v}_\varepsilon \in (0,\varepsilon)$.
It is immediate to verify that
\begin{equation}
\int_{\R_+}\mathcal{R}_\varepsilon\left(\varphi(v),t\right) f_\varepsilon(v,t) \, {\rm d}v \xrightarrow{\varepsilon \to 0^+} 0,
\end{equation}
because
\[
\left|\int_{\R_+}\mathcal{R}_\varepsilon\left(\varphi(v),t\right) f_\varepsilon(v,t) \, {\rm d}v\right| \le \dfrac{\varepsilon\,\lambda}{2}\,||\varphi||_m\, \left(\partial_v f_\varepsilon(\bar{v}_\varepsilon,t)+\partial_v f_\varepsilon(\tilde{v}_\varepsilon,t)+f_\varepsilon(\tilde{v}_\varepsilon,t)+f_\varepsilon(0,t)+1\right)
\]
where we have employed the elementary fact that $r[f_\varepsilon] \le 1$, and $\varphi\in \mathcal{D}$. If we assume also that $f_\varepsilon$ and $\partial_v f_\varepsilon$ are bounded in a neighborhood of the boundary $v=0$ for all small enough $\varepsilon$ and uniformly in time, then in the limit as $\varepsilon \to 0^+$, the solution $f_\varepsilon$ to~\eqref{eq:Boltz.coll}-\eqref{Boltz1}-\eqref{kernel} under the scaling \eqref{def:freq.eps}, then converges to the solution of the weak Fokker-Planck equation
\begin{equation}\label{eq:FP-weak}
\begin{split}
\int_{\R_+}\varphi(v)\, \partial_t f(v,t)\, {\rm d }v &=  \int_{\R_+}\varphi(v)\, J(f) \, \dd v
\end{split}
\end{equation}
where the operator $J$ is defined by
\begin{equation}\label{def:op.J}
\begin{split}
\int_{\R_+}\varphi(v) J(f) \, \dd v \coloneqq& - \dfrac{\lambda}{2}\,f(0,t)\int_{\R_+} \varphi'(v)\, f(v,t)\,{\rm d} v +\dfrac{\lambda}{2}\int_{\R_+} \varphi''(v)\,f(v,t)\,{\rm d} v \\&+\dfrac{\lambda}{2}\,\varphi'(0)\,f(0,t).
\end{split}
\end{equation}
Integrating by parts yields that
\begin{equation}\label{eq:FP-weak.2}
\begin{split}
\dfrac{\dd}{\dd t} \int_{\R_+} f(v,t)\,\varphi(v) \, {\rm d} v &=  \dfrac{\lambda}{2}\,f(0,t)\int_{\R_+} \varphi(v)\, \partial_v f(v,t)\,{\rm d} v  +\dfrac{\lambda}{2}\int_{\R_+} \varphi(v)\,\partial_{vv}f(v,t) \, {\rm d} v\\
&+\dfrac{\lambda}{2}\left[\varphi'(v)\,f(v,t) -\varphi(v)\,\partial_v f(v,t) - f(0,t)\,\varphi(v)\, f(v,t)\right]\big\vert_{v=0}^\infty\\
& +\dfrac{\lambda}{2}\,\varphi'(0)\,f(0,t),
\end{split}
\end{equation}
where in the second and third line we have several boundary terms. If we choose initial data with a smooth and rapid decay, then various boundary terms evaluated at infinity vanish since $\lim_{v \to \infty} f(v,t)=0$ and $\lim_{v \to \infty} \partial_v f(v,t)=0$. Consequently, we arrive at
\begin{equation}\label{eq:FP-weak.3}
\begin{split}
\int_{\R_+}\varphi(v)\, \partial_t f(v,t)\, {\rm d }v =&  \int_{\R_+}\varphi(v)\left(\dfrac{\lambda}{2}\,f(0,t)\, \partial_v f(v,t)  +\dfrac{\lambda}{2}\,\partial_{vv} f(v,t)\right)\, {\rm d} v \\
&+\dfrac{\lambda}{2}\,\varphi(0)\left[f(0,t)\,f(v,t) + \partial_v f(v,t)\right]\big\vert_{v=0}.
\end{split}
\end{equation}
From the weak form~\eqref{eq:FP-weak.3} of the Fokker-Planck equation, we obtain
the corresponding problem in strong form, which is~\eqref{eq:mainPDE} (with $\lambda = 2$), which we restate here for the reader's convenience:
\begin{equation}\label{eq:mainPDE.bis}
\begin{cases}
\partial_t f(v,t) = \dfrac{\lambda}{2}\left(\partial_{vv} f(v,t) + f(0,t)\,\partial_v f(v,t)\right), &~~v > 0,~t\geq 0,\\
\partial_v f(v,t) + f(0,t)\,f(v,t) = 0, &~~ v = 0,~t\geq 0.
\end{cases}
\end{equation}
In summary, we have proved the following:
\begin{theorem}\label{thm:quasi-invariant-limit}
Let $f^0\in \mathcal{P}_1(\R_+)$ be a smooth probability density, and assume that $f_\varepsilon(\cdot,0) = f(\cdot,0)$. Then as $\varepsilon \to 0^+$, the weak solution $f_\varepsilon(v,t)$ to the Boltzmann-type equation \eqref{eq:Boltz.coll}-\eqref{Boltz1}-\eqref{kernel} in the high frequency regime~\eqref{def:freq.eps} converges, under the assumption that $f_\varepsilon$ and $\partial_v f_\varepsilon$ are uniformly bounded in time in a neighbourhood of the boundary $v=0$ for all sufficiently small $\varepsilon$, to a probability density $f(v,t)$ which is a weak solution of the nonlinear Fokker–Planck equation~\eqref{eq:mainPDE}.
\end{theorem}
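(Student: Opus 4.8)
The plan is to pass to the limit $\varepsilon \to 0^+$ directly in the weak formulation, reusing the Taylor expansions already carried out in~\eqref{Boltz2.resc}--\eqref{Boltz.approx2}. First I would fix a test function $\varphi \in \mathcal{D}$ and start from the rescaled collision operator $\frac{1}{\varepsilon^2}\langle Q_\varepsilon(f_\varepsilon,f_\varepsilon),\varphi\rangle$. Expanding $\varphi(v\pm\varepsilon)$ to third order with Lagrange remainder and inserting the expansion~\eqref{eq:Taylor2} for $r[f_\varepsilon]$ decomposes this quantity, exactly as in~\eqref{Boltz.approx2}, into four pieces: the drift term $-\frac{\lambda}{2}f_\varepsilon(0,t)\int_{\R_+}\varphi'(v)f_\varepsilon(v,t)\,\dd v$, the diffusion term $\frac{\lambda}{2}\int_{\R_+}\varphi''(v)f_\varepsilon(v,t)\,\dd v$, the boundary-layer contribution $\frac{\lambda}{2}\varphi'(0)f_\varepsilon(0,t)$ (arising from the truncation of the integrals to $[0,\varepsilon)$), and the remainder $\int_{\R_+}\mathcal{R}_\varepsilon(\varphi(v),t)f_\varepsilon(v,t)\,\dd v$ defined in~\eqref{def:remainder}.

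The first key step is to show that the remainder is negligible. Using the explicit estimate recorded just after~\eqref{def:remainder}, together with $r[f_\varepsilon]\le 1$ and $\|\varphi\|_m\le 1$, one sees that $\bigl|\int_{\R_+}\mathcal{R}_\varepsilon f_\varepsilon\,\dd v\bigr| \le C\,\varepsilon$, where the constant $C$ depends only on the sizes of $f_\varepsilon$ and $\partial_v f_\varepsilon$ in a neighbourhood of $v=0$. It is precisely here that the hypothesis of uniform-in-time, uniform-in-$\varepsilon$ boundary regularity enters: it makes $C$ finite and independent of $\varepsilon$, so that $\int_{\R_+}\mathcal{R}_\varepsilon f_\varepsilon\,\dd v \to 0$ uniformly over $\varphi\in\mathcal{D}$, while the $\mathcal{O}(\varepsilon^0)$ boundary-layer correction survives in the limit.

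The second step is to secure enough compactness to pass each of the remaining three terms to the limit. Lemma~\ref{lem:epsilon_BDY} supplies uniform-in-$\varepsilon$ conservation of mass and mean together with control of the higher moments, so that the family $\{f_\varepsilon(\cdot,t)\}$ is tight; I would extract a subsequence converging weakly (in the sense of measures, at each fixed $t$) to a limit density $f(\cdot,t)$. The drift and diffusion integrals then converge when tested against the fixed $\varphi$, while the assumed uniform bound on the trace guarantees $f_\varepsilon(0,t)\to f(0,t)$ so that both the drift coefficient and the boundary-layer term converge. Passing to the limit yields precisely the weak identity~\eqref{eq:FP-weak}--\eqref{def:op.J}, and integrating by parts as in~\eqref{eq:FP-weak.2}--\eqref{eq:FP-weak.3} recovers the strong form: for rapidly decaying data the boundary terms at $v=+\infty$ vanish, and the surviving contributions at $v=0$ combine with $\frac{\lambda}{2}\varphi'(0)f(0,t)$ to force the Robin condition $\partial_v f(0,t)+f(0,t)^2=0$, giving~\eqref{eq:mainPDE}.

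The main obstacle is the convergence of the \emph{trace} $f_\varepsilon(0,t)\to f(0,t)$. Weak convergence of measures carries no information about pointwise boundary values, and the nonlinearity in both the drift coefficient and the boundary layer is driven entirely by $f_\varepsilon(0,t)$; this is exactly why the uniform boundary regularity of $f_\varepsilon$ and $\partial_v f_\varepsilon$ must be \emph{assumed} rather than derived, and it is the same hypothesis that closes the remainder estimate. Upgrading this assumption to a genuine a priori bound would require a well-posedness and boundary-regularity theory for the collision-like equation~\eqref{eq:Boltz.coll}--\eqref{kernel} uniform in $\varepsilon$, which lies beyond the formal quasi-invariant limit and explains the conditional phrasing of the statement.
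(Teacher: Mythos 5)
Your proposal follows essentially the same route as the paper's own derivation: the decomposition of the rescaled collision operator into drift, diffusion, boundary-layer and remainder contributions as in~\eqref{Boltz.approx2}, the remainder estimate via the assumed uniform boundary regularity of $f_\varepsilon$ and $\partial_v f_\varepsilon$, and the passage to the weak Fokker--Planck identity~\eqref{eq:FP-weak}--\eqref{def:op.J} followed by integration by parts to extract the Robin condition. Your added remarks on tightness and on the convergence of the trace $f_\varepsilon(0,t)$ merely make explicit a compactness step that the paper leaves implicit, so the argument is the same.
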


The boundary value problem~\eqref{eq:mainPDE.bis} is a nonlinear Fokker-Planck equation with constant diffusion and nonlinear flux (due to the presence of the flux coefficient $f(0,t)$), as well as a Robin-type boundary condition. Robin-type boundary conditions in the context of kinetic models for wealth exchange has also appeared, for example, in~\cite{torregrossa_wealth_2017}, where the authors derive by means of a quasi-invariant limit procedure, a Fokker-Planck equation with non-constant diffusion which needs to be coupled with a Robin-type condition, which are essentially no-flux conditions, which guarantee the conservation of mass. In fact, the Robin-type boundary condition requires an exact balance between the advective and diffusive fluxes at the boundary $v=0$, which must sum up to zero to guarantee a no-flux boundary condition. In fact, the PDE \eqref{eq:mainPDE.bis} may be rewritten in the flux form as
\begin{equation}\label{eq:flux}
\partial_t f = \partial_v \mathcal{F}[f], \quad \textrm{with~~$\mathcal{F}[f](\cdot,t) \coloneqq \dfrac{\lambda}{2}\left(\partial_v f(\cdot,t)+f(0,t)\,f(\cdot,t)\right)$}.
\end{equation}
In this way, the boundary condition reads $\mathcal{F}[f](0,t)=0$ and can be viewed as a no-flux condition which leads to preservation of total probability mass.

We stress the fact that the quadratic term $f(0,t)^2$ appearing in the Robin-type boundary condition, which reads as
\begin{equation}\label{eq:Robin-type-BC}
\partial_v f(0,t) + f^2(0,t) = 0,
\end{equation}
comes from the flux coefficient $f(0,t)$, as expected from the pioneering work~\cite{feller_1951}.
\begin{remark}
We emphasize that imposing an initial condition $f^0(v) \coloneqq f(v,0)$ compatible with the Robin-type boundary condition is crucial for the BDY PDE \eqref{eq:mainPDE} to preserve the total probability mass. Meanwhile, the Robin-type boundary condition \eqref{eq:Robin-type-BC} must already hold at the initial time $t=0$ in order for classical solutions to the initial boundary value problem \eqref{eq:mainPDE.bis} to exist. Otherwise, the solution is forced to adjust at the boundary in a non-smooth manner to satisfy the boundary condition \eqref{eq:Robin-type-BC} instantaneously for $t>0$. From a numerical perspective, it is also preferable to choose initial data that are already compatible with the boundary condition, as this typically enhances numerical stability.
\end{remark}

We now state the following result on the solution of the initial boundary value problem~\eqref{eq:mainPDE.bis} with initial condition $f^0$.
\begin{lemma} \label{lemma:g.P2-advection}
Assume that $f$ is a weak solution to the BDY PDE problem \eqref{eq:mainPDE.bis}, starting from an initial condition $f^0 \in \mathcal{P}_1(\mathbb{R}_+)$ which is a smooth probability density with unitary mass and mean value $\mu > 0$. Then
\begin{equation}\label{eq:conservation_laws}
\frac{\dd}{\dd t} \int_0^\infty f(v,t)\,\dd v = 0 \quad \textrm{and} \quad \frac{\dd}{\dd t} \int_0^\infty v\,f(v,t)\,\dd v = 0.
\end{equation}
In other words, the BDY PDE preserves both the total probability mass and the mean value, and $f(\cdot,t) \in L^1(\R_+)$ for $t \ge 0$ with $||f(\cdot,t)||_{L^1} \equiv 1$. Moreover, the Boltzmann-Gibbs distribution
$f^\infty$ \eqref{eq:BG} is the unique equilibrium solution of \eqref{eq:mainPDE.bis}. Furthermore, if $f(0,t)$ is bounded in time, then any weak solution $f$ to~\eqref{eq:mainPDE.bis} (i.e., to ~\eqref{eq:FP-weak.3}) issuing from an initial condition $f^0\in\cP_3(\R_+)$ is such that $f\in\cP_3(\R_+)$ for all $t>0$.
\end{lemma}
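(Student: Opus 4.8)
The plan is to work directly with the divergence (flux) form \eqref{eq:flux} of the equation, $\partial_t f = \partial_v \mathcal{F}[f]$ with $\mathcal{F}[f] = \frac{\lambda}{2}\left(\partial_v f + f(0,t)\,f\right)$, and to test against the weights $\varphi(v)=v^n$, integrating by parts and exploiting two facts: the no-flux boundary condition $\mathcal{F}[f](0,t)=0$ and the rapid decay of $f$ and its derivatives as $v\to\infty$ (inherited from the smooth, rapidly decaying data). For \textbf{mass conservation} I would integrate $\partial_t f = \partial_v \mathcal{F}[f]$ over $\R_+$, so that the right-hand side telescopes to $\mathcal{F}[f](\infty,t)-\mathcal{F}[f](0,t)=0$, the first term vanishing by decay and the second by the Robin condition; hence $\|f(\cdot,t)\|_{L^1}\equiv\|f^0\|_{L^1}=1$. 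For \textbf{mean conservation} I would test with $\varphi(v)=v$ and integrate by parts once: $\frac{\dd}{\dd t}M_1 = [v\,\mathcal{F}[f]]_0^\infty - \int_{\R_+}\mathcal{F}[f]\,\dd v$, where the boundary contribution vanishes and
\[
\int_{\R_+}\mathcal{F}[f]\,\dd v = \frac{\lambda}{2}\left(\int_{\R_+}\partial_v f\,\dd v + f(0,t)\int_{\R_+} f\,\dd v\right) = \frac{\lambda}{2}\left(-f(0,t)+f(0,t)\right)=0,
\]
using $\int_{\R_+}\partial_v f\,\dd v=-f(0,t)$ and the normalization $\int_{\R_+} f=1$ just established. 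This yields $M_1(t)\equiv\mu$.

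For the \textbf{equilibrium}, I would observe that a stationary profile solves the linear ODE $\partial_{vv}f^\infty + c\,\partial_v f^\infty=0$ with $c\coloneqq f^\infty(0)$. Integrability on $[0,\infty)$ forces the general solution $A+B\,\expo^{-cv}$ to have $A=0$ and $c>0$, so $f^\infty(v)=B\,\expo^{-cv}$; evaluating at $v=0$ gives $B=c$, after which normalization is automatic and $M_1=1/c=\mu$ pins down $c=1/\mu$, recovering \eqref{eq:BG}. The Robin condition $\partial_v f^\infty(0)+\left(f^\infty(0)\right)^2=-c^2+c^2=0$ holds, and uniqueness within probability densities of mean $\mu$ follows from the rigidity of this ODE argument.

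For \textbf{propagation of $\cP_3$}, I would derive the moment hierarchy by testing $\partial_t f=\partial_v\mathcal{F}[f]$ with $\varphi(v)=v^n$ and integrating by parts twice, which gives, for $n\geq 2$,
\[
\frac{\dd}{\dd t}M_n(t) = \frac{n(n-1)\lambda}{2}\,M_{n-2}(t) - \frac{n\lambda}{2}\,f(0,t)\,M_{n-1}(t).
\]
In particular $\frac{\dd}{\dd t}M_2 = \lambda\left(1-\mu\,f(0,t)\right)$ and $\frac{\dd}{\dd t}M_3 = 3\lambda\mu - \frac{3\lambda}{2}\,f(0,t)\,M_2$, consistent with the leading-order expressions found for the $\varepsilon$-BDY PDE in Lemma~\ref{lem:epsilon_BDY}. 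Under the standing hypothesis that $f(0,t)$ is bounded in time, the first identity shows $M_2$ grows at most linearly, hence stays finite on every bounded interval since $M_2(0)<\infty$ (from $f^0\in\cP_3$); feeding this into the second identity bounds the right-hand side of the $M_3$-equation on bounded intervals, so $M_3(t)<\infty$ for all $t>0$, i.e. $f(\cdot,t)\in\cP_3(\R_+)$.

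The \textbf{main obstacle} is not the algebra of the integrations by parts but the rigorous justification that all boundary terms at $v=+\infty$ (namely $\mathcal{F}[f]$, $v\,\mathcal{F}[f]$, and $v^{n-1}f$) vanish for a merely weak solution; this is precisely where the smoothness and rapid-decay assumptions on the data are used. The boundedness of $f(0,t)$ in time is the essential hypothesis that both keeps the drift coefficient from degenerating and closes the moment estimates above.
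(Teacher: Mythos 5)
Your proof is correct and follows essentially the same route as the paper: both arguments rest on testing the equation against $1$, $v$, $v^2$, $v^3$ (equivalently, reading off the flux form and integrating by parts), using the Robin/no-flux condition at $v=0$ and decay at infinity to kill the boundary terms, and both arrive at the same moment identities $\frac{\dd}{\dd t}M_2=\lambda\,(1-\mu\,f(0,t))$ and $\frac{\dd}{\dd t}M_3=\lambda\,(3\mu-\tfrac{3}{2}\,f(0,t)\,M_2)$. Two points where you differ, both to your advantage. First, you actually prove uniqueness of the equilibrium by solving the stationary ODE, invoking integrability to discard the constant mode, and pinning down the rate via the self-consistency $c=f^\infty(0)$ and the mean constraint, whereas the paper merely asserts that one can ``readily verify'' it. Second, you close the $\cP_3$ propagation by observing that boundedness of $f(0,t)$ makes the right-hand sides of the moment ODEs bounded on bounded time intervals, so $M_2$ grows at most linearly and $M_3$ at most polynomially, hence both stay finite for every finite $t$; the paper instead argues that ``even if the moments were monotonically increasing, their stationary values are finite,'' which as stated does not by itself imply boundedness (a monotone quantity can diverge even when the equilibrium value is finite). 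Your Gronwall-type closing is the argument the lemma actually needs, and your remark that the only genuinely delicate point is justifying the vanishing of the boundary terms at $v=+\infty$ for a weak solution is well placed.
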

\begin{proof}
Setting $\varphi(v) \equiv 1$ and $\varphi(v) = v$ respectively in~\eqref{eq:FP-weak.2} the weak formulation, we obtain the conservation of total probability mass and the mean value. Alternatively, a straightforward computation gives rise to (choosing $\lambda=2$ for brevity)
\begin{equation}\label{eq:mass_conservation}
\begin{aligned}
\frac{\dd}{\dd t} \int_0^\infty f(v,t)\,\dd v &= \int_0^\infty \partial_{vv} f(v,t)\,\dd v + f(0,t)\,\int_0^\infty \partial_v f(v,t)\,\dd v \\
&= -\partial_v f(0,t) - f^2(0,t) = 0,
\end{aligned}
\end{equation}
where the last equality follows from the nonlinear Robin-type boundary condition. On the other hand, we perform two integrations by parts to obtain
\begin{equation}\label{eq:mean_conservation}
\begin{aligned}
\frac{\dd}{\dd t} \int_0^\infty v\,f(v,t)\,\dd v &= \int_0^\infty v\,\partial_{vv} f(v,t)\,\dd v + f(0,t)\,\int_0^\infty v\,\partial_v f(v,t)\,\dd v \\
&= -\int_0^\infty \partial_v f(v,t)\,\dd v - f(0,t)\,\int_0^\infty f(v,t)\,\dd x \\
&= f(0,t) - f(0,t) = 0.
\end{aligned}
\end{equation}
In addition, we can readily verify that the Boltzmann–Gibbs distribution $f^\infty$~\eqref{eq:BG} constitutes the unique equilibrium solution of~\eqref{eq:mainPDE.bis}. We also recall that the Boltzmann-Gibbs distribution admits finite moments of order two and three, which equal to $2\,\mu^2$ and $6\,\mu^3$, respectively.
Then in the same spirit as the proof of Lemma~\ref{lem:epsilon_BDY}, if we set $\varphi(v)=v^2$ and $\varphi(v) = v^3$ respectively in~\eqref{eq:FP-weak.2}, we deduce that
\[\dfrac{\dd}{\dd t} M_2(t)= \lambda\,(1-\mu\,f(0,t)) \quad \textrm{and} \quad \dfrac{\dd}{\dd t} M_3(t)=\lambda\,\left(3\mu -\frac 32\,f(0,t)\,M_2(t)\right).\]
In general, $M_2$ and $M_3$ are not conserved and we cannot state anything about their monotonicity. However, even if in the worst case scenario that they were monotonically increasing, we know that their stationary asymptotic solutions are finite. Therefore we can conclude that if the initial condition $f^0\in \cP_3(\R_+)$ and $f(0,t)$ is bounded in time, then the second and third moments of $f$ are finite for all $t \geq 0$, whence $f\in\cP_3(\R_+)$.
\end{proof}

Now we aim to prove the rigorous convergence of $f_\varepsilon$ towards $f$ in a specific metric space. To this aim, and motivated by~\cite{torregrossa_wealth_2017}, we consider a family of metrics which is known as the Fourier-based distances, which were introduced in~\cite{gabetta_1995} in order to study the trend to equilibrium of solutions to the space homogeneous Boltzmann equation for Maxwell molecules, and then applied to a variety of other kinetic models of Maxwell type (see the lecture notes/review \cite{carrillo_2007}). Given $s \geq 1$ and two probability densities $f_1$ and $f_2$ on $\R$, their Fourier-based distance $d_s(f_1,f_2)$ of order $s$ is defined by
\begin{equation}
d_s(f_1,f_2) \coloneqq \displaystyle \sup\limits_{\xi \in \R} \dfrac{|\hat{f}_1(\xi)-\hat{f}_2(\xi)|}{|\xi|^s},
\end{equation}
where $\hat{f}(\xi)\coloneqq \int_{\mathbb R} \expo^{-i\,v\,\xi}\,f(v)\,\dd v$ denotes the Fourier transform of $f \in \cP(\R)$.
\begin{remark}
We remark that, the Fourier distance can be defined in the present case on $\R_+$, as the support of $f$ is in $\R_+$. This can be rigorously justified by exploiting the positivity of the microscopic rule~\eqref{micro.rule} on positive interaction wealths, i.e., $v,w \ge 0 \rightarrow v',w' \ge0$. Invoking the same argument as in~\cite{loy_essentials_2025}, elementary computations show that even if the microscopic dynamics was defined on $\R$ but the support of the initial condition $f^0$ was in $\R_+$, then the support of $f(\cdot,t)$ would be preserved in $\R_+$ for all times. 
\end{remark}
The distance $d_s(f_1,f_2)$ is finite provided that $f_1$ and $f_2$ share the same moments up to the order given by the entire part of $s$ if $s \notin \mathbb{N}_+$ or up to $s-1$ if $s \in \mathbb{N}_+$. As the moments of the solution to the Boltzmann collision-like equation \eqref{eq:epsilon_BDY} on the $1/\varepsilon^2$ scale, and to the nonlinear Fokker-Planck equation \eqref{eq:mainPDE.bis} are equal up to order $1$, provided the same initial condition, it motivates us to consider the $d_2$ distance.

We can now prove the following result:

\begin{theorem} \label{theo:q.i.-advection}
Let $f_\varepsilon\in \mathcal{C}^0([0,\,+\infty);\,\mathcal{P}_3(\R_+))$ be the solution to~\eqref{eq:Boltz.coll}-\eqref{Boltz1}-\eqref{kernel} under the high frequency scaling~\eqref{def:freq.eps}, issuing from an initial datum $f^0\in\mathcal{P}_3(\R_+)$ satisfying~\eqref{eq:conservation_laws}. Let $f\in \mathcal{C}^0([0,\,+\infty);\,\mathcal{P}_3(\R_+))$ be the weak solution to~\eqref{eq:FP-weak.2} issuing from $f^0$. Let us also assume that $f_\varepsilon$ and $\partial_v f_\varepsilon$ are uniformly bounded in time in an $\varepsilon$ neighborhood of the boundary $v = 0$. Then
\begin{equation}\label{eq:d2_conv}
\lim_{\varepsilon \to 0^+ } \sup_{t\in [0,\,T]}d_2(f_\varepsilon,f)=0
\end{equation}
for any pre-fixed $T>0$.
\end{theorem}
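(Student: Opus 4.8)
The plan is to run a Fourier-based stability estimate in the spirit of \cite{gabetta_1995,torregrossa_wealth_2017}, tracking the evolution of the difference of the two Fourier transforms. Choosing the complex test function $\varphi(v)=\expo^{-i\,v\,\xi}$ in the rescaled weak form \eqref{Boltz.approx2} for $f_\varepsilon$ and in the limiting weak form \eqref{def:op.J} for $f$, and using $\varphi'(v)=-i\,\xi\,\varphi(v)$, $\varphi''(v)=-\xi^2\,\varphi(v)$ and $\varphi'(0)=-i\,\xi$, I would obtain the pair of scalar ODEs
\begin{align*}
\partial_t \hat f_\varepsilon(\xi,t) &= \tfrac{\lambda}{2}\,i\,\xi\,f_\varepsilon(0,t)\,\hat f_\varepsilon(\xi,t) - \tfrac{\lambda}{2}\,\xi^2\,\hat f_\varepsilon(\xi,t) - \tfrac{\lambda}{2}\,i\,\xi\,f_\varepsilon(0,t) + \widehat{\mathcal R}_\varepsilon(\xi,t),\\
\partial_t \hat f(\xi,t) &= \tfrac{\lambda}{2}\,i\,\xi\,f(0,t)\,\hat f(\xi,t) - \tfrac{\lambda}{2}\,\xi^2\,\hat f(\xi,t) - \tfrac{\lambda}{2}\,i\,\xi\,f(0,t),
\end{align*}
where $\widehat{\mathcal R}_\varepsilon(\xi,t)$ denotes the Fourier image of the remainder \eqref{def:remainder}. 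Setting $h_\varepsilon \coloneqq f_\varepsilon-f$ and using $f_\varepsilon(0,t)\hat f_\varepsilon - f(0,t)\hat f = f_\varepsilon(0,t)\,\hat h_\varepsilon + (f_\varepsilon(0,t)-f(0,t))\,\hat f$, the subtraction gives
\[
\partial_t \hat h_\varepsilon = \tfrac{\lambda}{2}\,i\,\xi\,f_\varepsilon(0,t)\,\hat h_\varepsilon - \tfrac{\lambda}{2}\,\xi^2\,\hat h_\varepsilon + \tfrac{\lambda}{2}\,i\,\xi\,\big(f_\varepsilon(0,t)-f(0,t)\big)\big(\hat f(\xi,t)-1\big) + \widehat{\mathcal R}_\varepsilon.
\]

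I would then estimate the modulus mode by mode. Since $\mathrm{Re}\big(\overline{\hat h_\varepsilon}\,i\,\xi\,f_\varepsilon(0,t)\,\hat h_\varepsilon\big)=0$, the transport term does not affect $|\hat h_\varepsilon|$, the diffusion contributes the dissipation $-\tfrac{\lambda}{2}\,\xi^2|\hat h_\varepsilon|$, and therefore
\[
\frac{\dd}{\dd t}\,\frac{|\hat h_\varepsilon(\xi,t)|}{\xi^2} \le -\tfrac{\lambda}{2}\,|\hat h_\varepsilon(\xi,t)| + \tfrac{\lambda}{2}\,\frac{|\hat f(\xi,t)-1|}{|\xi|}\,\big|f_\varepsilon(0,t)-f(0,t)\big| + \frac{|\widehat{\mathcal R}_\varepsilon(\xi,t)|}{\xi^2}.
\]
Because the mass and the mean are conserved (Lemma~\ref{lemma:g.P2-advection}), one has $\hat f(0,t)=1$ and $|\hat f(\xi,t)-1|\le|\xi|\,\mu$, so the middle factor is bounded by $\mu$ uniformly in $\xi$; dropping the favourable dissipative term and passing to the supremum over $\xi$ yields, in the sense of the upper Dini derivative,
\[
\frac{\dd^{+}}{\dd t}\,d_2(f_\varepsilon,f) \le \tfrac{\lambda}{2}\,\mu\,\big|f_\varepsilon(0,t)-f(0,t)\big| + \sup_{\xi}\frac{|\widehat{\mathcal R}_\varepsilon(\xi,t)|}{\xi^2},\qquad d_2(f_\varepsilon,f)\big|_{t=0}=0,
\]
the initial value vanishing because $f_\varepsilon(\cdot,0)=f(\cdot,0)=f^0$.

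The remainder is handled through a frequency truncation. With $\varphi=\expo^{-i v\xi}$ one has $|\varphi^{(k)}|=|\xi|^k$, and conservation of mass and mean forces $\widehat{\mathcal R}_\varepsilon(\xi,t)=\mathcal O(\xi^2)$ as $\xi\to0$, so that $|\widehat{\mathcal R}_\varepsilon|/\xi^2$ stays bounded near the origin; at large frequencies the cubic Taylor term in \eqref{def:remainder} produces a factor $\varepsilon\,|\xi|^3$, whence $|\widehat{\mathcal R}_\varepsilon|/\xi^2\lesssim \varepsilon\,|\xi|$. Splitting at a threshold $|\xi|=R$, I would bound $|\hat h_\varepsilon|/\xi^2\le 2/R^2$ for $|\xi|>R$ directly, and use $|\widehat{\mathcal R}_\varepsilon|/\xi^2\le C\,\varepsilon\,R$ for $|\xi|\le R$, the constant $C$ being controlled by the near-boundary bounds on $f_\varepsilon,\partial_v f_\varepsilon$, by $r[f_\varepsilon]\le1$ and by the finite third moment ($f_\varepsilon\in\mathcal P_3(\R_+)$); the choice $R=\varepsilon^{-1/2}$ then renders both contributions $o(1)$ uniformly on $[0,T]$.

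The genuine obstacle is the boundary forcing $|f_\varepsilon(0,t)-f(0,t)|$, which is intrinsic to the nonlinear boundary-value drift and is \emph{not} controlled by $d_2$, since the Fourier metric only sees averages against the density and not its pointwise trace at $v=0$. Interpolating the trace between the low-frequency information $|\hat h_\varepsilon(\xi)|\le\xi^2\,d_2$ and the uniform near-boundary regularity would yield only a sublinear estimate $|f_\varepsilon(0,t)-f(0,t)|\lesssim d_2^{\theta}$ with $\theta<1$, which is too weak to close the argument: the scalar inequality $y'\lesssim y^{\theta}+o(1)$ with $y(0)=0$ need not force $y\to0$ on $[0,T]$. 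I would therefore establish, as a separate preliminary step, the uniform boundary convergence $\sup_{t\in[0,T]}|f_\varepsilon(0,t)-f(0,t)|\to0$ as $\varepsilon\to0^{+}$: the standing assumption that $f_\varepsilon$ and $\partial_v f_\varepsilon$ are uniformly bounded near $v=0$ gives equicontinuity in $v$ on a boundary strip, so that an Arzel\`a--Ascoli argument, combined with the weak convergence of $f_\varepsilon$ to $f$ from Theorem~\ref{thm:quasi-invariant-limit} and the moment bounds, upgrades to locally uniform convergence near the boundary, in particular at $v=0$. Feeding this $o(1)$ forcing together with the $o(1)$ remainder into the differential inequality and integrating over $[0,T]$ yields $\sup_{t\in[0,T]}d_2(f_\varepsilon,f)=o(1)$, which is precisely \eqref{eq:d2_conv}.
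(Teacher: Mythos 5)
Your overall strategy (Fourier-mode energy estimate on $\hat f_\varepsilon-\hat f$, splitting into a consistency error and a stability term) is the same as the paper's, but your specific decomposition creates an obstacle that the paper's proof is designed to avoid, and your fix for that obstacle does not work. By Taylor-expanding the rescaled collision operator \emph{before} subtracting, you replace the exact increment $(\expo^{-i\xi\varepsilon}-1)/\varepsilon$ by $-i\xi$ and are then forced to confront the difference $f_\varepsilon(0,t)-f(0,t)$ of the boundary traces. You correctly observe that this quantity is not controlled by $d_2$ and that a sublinear interpolation bound cannot close the Gronwall argument (the comparison ODE $y'\le C y^\theta+\delta$, $y(0)=0$, indeed has no uniqueness). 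The paper instead adds and subtracts $\tfrac{1}{\varepsilon^2}\hat Q_\varepsilon(\hat f,\hat f)$, i.e.\ the \emph{unexpanded} collision operator evaluated on the limit density; in the resulting identity \eqref{proof.Teo} the boundary contribution appears as $\tfrac{\lambda}{2\varepsilon}\big(f(0,t)[\expo^{-i\xi\varepsilon}\hat f-\hat f]-f_\varepsilon(0,t)[\expo^{-i\xi\varepsilon}\hat f_\varepsilon-\hat f_\varepsilon]\big)$, and each of the two terms is bounded \emph{separately} by $\tfrac{\lambda}{2\varepsilon}\cdot\tfrac{\varepsilon^2}{2}\,D$ using \eqref{inequality} with $s=2$ and the uniform bound $D$ on the traces. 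The difference of traces never needs to be estimated: only their boundedness is used, which is exactly what the hypotheses supply.

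Your proposed repair --- proving $\sup_{t\in[0,T]}|f_\varepsilon(0,t)-f(0,t)|\to 0$ as a preliminary step --- has a genuine gap. The assumed bounds on $f_\varepsilon$ and $\partial_v f_\varepsilon$ near $v=0$ give equicontinuity in $v$ only; to extract \emph{uniform-in-time} convergence of the trace via Arzel\`a--Ascoli you would need equicontinuity in $t$ as well, i.e.\ some control on $\partial_t f_\varepsilon$ near the boundary. From the rescaled equation, $\partial_t f_\varepsilon$ on the strip $0\le v<\varepsilon$ is of size $\tfrac{1}{\varepsilon^2}\cdot\mathcal O(\varepsilon)=\mathcal O(1/\varepsilon)$ unless the limiting Robin condition is already nearly satisfied, so time-equicontinuity is not available from the stated hypotheses. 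Moreover, the weak ($d_2$ or distributional) convergence of Theorem~\ref{thm:quasi-invariant-limit} does not identify pointwise boundary values without such joint regularity, and leaning on that theorem here is structurally awkward since the present theorem is meant to be its quantitative justification. Your treatment of the remainder $\widehat{\mathcal R}_\varepsilon$ by frequency truncation is reasonable (though more elaborate than needed: the paper bounds the analogous boundary-layer integral directly by $\mathcal O(\varepsilon)$ uniformly in $\xi$ using \eqref{inequality} with $s=1$ and $s=2$), but the argument as a whole does not close without the unproved trace-convergence step. The fix is to adopt the paper's decomposition so that the traces enter only through their sup bound.
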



\begin{proof}
We need to evaluate the distance
\[
d_2(f_\varepsilon,f) = \displaystyle \sup\limits_{\xi \in \R} \dfrac{|\hat{f}_\varepsilon(\xi,t)-\hat{f}(\xi,t)|}{|\xi|^2}.
\]
To this aim, we first write the evolution equation for the Fourier transform of the density $f_\varepsilon$, which reads
\[
\partial_t \hat{f}_\varepsilon(\xi,t) = \dfrac{1}{\varepsilon^2}\, \hat{Q}_\varepsilon(\hat{f}_\varepsilon,\hat{f}_\varepsilon)
\]
where the right-hand side can be computed setting $\varphi(v)=\expo^{-i\xi v}$ in the right hand side of~\eqref{Boltz2.resc}, and is then defined for a generic probability density $g$ and its Fourier transform $\hat{g}$ by
\begin{equation}\label{def:hat.Qeps}
\dfrac{1}{\varepsilon^2}\, \hat{Q}_\varepsilon(\hat{g},\hat{g}) = \dfrac{\lambda}{2\,\varepsilon^2}\,\left[\expo^{i\xi \varepsilon}\, \hat{g}-\hat{g}+r[g]\,\left(\expo^{-i\xi \varepsilon}\hat{g}-\hat{g}\right)\right] + \dfrac{\lambda}{2\,\varepsilon^2}\,\int_0^\varepsilon \expo^{-i\xi v}\,(1-\expo^{-i\xi\varepsilon})\, g(v,t) \, \dd v.
\end{equation}
As a second step, we consider the Fourier transform of the Fokker-Planck equation~\eqref{eq:FP-weak}-\eqref{def:op.J}
\[
\partial_t \hat{f} = \hat{J}(\hat{f}),
\]
where $\hat{J}$ is the Fourier transform of the Fokker-Planck operator $J$, which can be computed by setting $\varphi(v)=\expo^{-i\xi v}$ in~\eqref{def:op.J}, and thus reads
\begin{equation}\label{def:op.J.F}
\hat{J}(\hat{f}) = \dfrac{\lambda}{2}\left[i\,f(0,t)\,\xi\,(\hat{f}-1)-\xi^2\,\hat{f}\right].
\end{equation}
As a consequence we have that
\[
\partial_t \left(\hat{f}_\varepsilon -\hat{f}\right) = \dfrac{1}{\varepsilon^2}\,\hat{Q}_\varepsilon(\hat{f}_\varepsilon,\hat{f}_\varepsilon)-\hat{J}(\hat{f}),
\]
where $\frac{1}{\varepsilon^2}\,\hat{Q}_\varepsilon (\hat{f},\hat{f})$ is defined by~\eqref{def:hat.Qeps} applied to $\hat{f}_\varepsilon$.
Then, by adding and subtracting $\frac{1}{\varepsilon^2}\,\hat{Q}_\varepsilon (\hat{f},\hat{f})$, which is defined by~\eqref{def:hat.Qeps} applied to $\hat{f}$, we obtain
\begin{equation}
\partial_t \left(\hat{f}_\varepsilon-\hat{f}\right) = \dfrac{1}{\varepsilon^2}\, \hat{Q}_\varepsilon(\hat{f}_\varepsilon,\hat{f}_\varepsilon)- \dfrac{1}{\varepsilon^2}\,\hat{Q}_\varepsilon(\hat{f},\hat{f})+\dfrac{1}{\varepsilon^2}\,\hat{Q}_\varepsilon(\hat{f},\hat{f})-\hat{J}(\hat{f}).
\end{equation}
By rewriting~\eqref{def:hat.Qeps} and exploiting~\eqref{eq:Taylor2}, we obtain after suitable rearrangements that
\begin{equation*}
\begin{split}
\dfrac{1}{\varepsilon^2}\,\hat{Q}_\varepsilon(\hat{f},\hat{f})-\hat{J}(\hat{f}) =& \dfrac{\lambda}{2}\left(\dfrac{\expo^{i\xi \varepsilon}-1}{\varepsilon^2}+\xi^2 +\dfrac{\expo^{-i\xi \varepsilon}-1}{\varepsilon^2}\right)\hat{f} +\dfrac{\lambda}{2}\,i\,\xi
\,f(0,t)\\
-&\dfrac{\lambda}{2}\,f(0,t)\left[\left(\dfrac{e^{-i\xi \varepsilon}-1}{\varepsilon}\right) +i\,\xi\right]\hat{f} + \dfrac{\lambda}{2\,\varepsilon^2}\,\int_0^\varepsilon \expo^{-iv\xi}f(v,t) \, {\rm d} v.
\end{split}
\end{equation*}
Thanks to the Taylor expansion of $\expo^{\pm i\xi \varepsilon}$, we deduce that
\begin{equation*}
\begin{split}
\dfrac{1}{\varepsilon^2}\,\hat{Q}_\varepsilon(\hat{f},\hat{f})-\hat{J}(\hat{f}) &=\dfrac{\lambda}{2}\, f(0,t) \left(\dfrac{\xi^2\,\varepsilon}{2}\,\hat{f}+i\,\xi\right)+\dfrac{\lambda}{2\,\varepsilon^2}\int_0^\varepsilon \expo^{-iv\xi}\,f(v,t) \, {\rm d} v\\
&=\dfrac{\lambda}{2}\, f(0,t)\, \left(\dfrac{1-\expo^{-i\xi\varepsilon}}{\varepsilon}+\frac{\xi^2\, \varepsilon}{2}\,(\hat{f}-1)\right)+\dfrac{\lambda}{2\,\varepsilon^2}\int_0^\varepsilon \expo^{-iv\xi}\,(1-
\expo^{-i\xi\varepsilon})\, f(v,t) \, {\rm d} v.
\end{split}
\end{equation*}
Moreover,
\begin{equation}\label{proof.Teo}
\begin{split}
\dfrac{1}{\varepsilon^2}\,\hat{Q}_\varepsilon(\hat{f}_\varepsilon,\hat{f}_\varepsilon)- \dfrac{1}{\varepsilon^2}\,\hat{Q}_\varepsilon(\hat{f},\hat{f})=&\dfrac{\lambda}{\varepsilon^2}(\hat{f}-\hat{f}_\varepsilon)+\dfrac{\lambda}{2\,\varepsilon^2}\left(\expo^{i\xi\varepsilon}+\expo^{-i\xi\varepsilon}\right)(\hat{f}_\varepsilon-\hat{f})\\
&+\dfrac{\lambda}{2\varepsilon}\left(f(0,t)\left[\expo^{-i\xi\varepsilon}\,\hat{f}-\hat{f}\right]-f_\varepsilon(0,t)\left[\expo^{-i\xi\varepsilon}\,\hat{f}_\varepsilon-\hat{f}_\varepsilon\right]\right)\\
&+\dfrac{\lambda}{2\,\varepsilon^2}\,\int_0^\varepsilon \expo^{-i\xi v}\,(1-\expo^{-i\xi\varepsilon})\left(f_\varepsilon(v,t)-f(v,t)\right) \, \dd v.
\end{split}
\end{equation}
As a consequence,
\begin{equation}
\partial_t \left(\dfrac{\hat{f}_\varepsilon-\hat{f}}{|\xi|^2}\right) =  \dfrac{\dfrac{1}{\varepsilon^2}\, \hat{Q}_\varepsilon(\hat{f}_\varepsilon,\hat{f}_\varepsilon)- \dfrac{1}{\varepsilon^2}\,\hat{Q}_\varepsilon(\hat{f},\hat{f})}{|\xi|^2}+\dfrac{\dfrac{1}{\varepsilon^2}\,\hat{Q}_\varepsilon(\hat{f},\hat{f})-\hat{J}(\hat{f})}{|\xi|^2} .
\end{equation}
Let $f_\varepsilon$, $f$ be solutions departing from initial values $f_\varepsilon^0 \coloneqq f_\varepsilon(v,0), f^0 \coloneqq f(v,0)$ which have moments bounded up to order three and equal mass and average, so that their 2-Fourier distance $d_2(f_\varepsilon^0,f^0)$ is finite.
Then from~\eqref{proof.Teo} we deduce that
\begin{equation}
\begin{split}
\partial_t \left(\dfrac{\hat{f}_\varepsilon-\hat{f}}{|\xi|^2}\right) +\dfrac{\lambda}{\varepsilon^2}\,\dfrac{\hat{f}_\varepsilon-\hat{f}}{|\xi|^2} =& \dfrac{\lambda}{2\,\varepsilon^2}\left(\expo^{i\xi\varepsilon}+\expo^{-i\xi\varepsilon}\right)\,\dfrac{\hat{f}_\varepsilon-\hat{f}}{|\xi|^2} +\dfrac{\lambda}{2\,\varepsilon^2\,|\xi|^2}\,\int_0^\varepsilon \expo^{-i\xi v}\,(1-\expo^{-i\xi\varepsilon})\,f_\varepsilon(v,t) \, \dd v\\
&+\dfrac{\lambda}{2\,\varepsilon}\left(f(0,t)\,\dfrac{\left[\expo^{-i\xi\varepsilon}\,\hat{f}-\hat{f}\right]}{|\xi|^2}-f_\varepsilon(0,t)\,\dfrac{\left[\expo^{-i\xi\varepsilon}\,\hat{f}_\varepsilon-\hat{f}_\varepsilon\right]}{|\xi|^2}\right)\\
&+\dfrac{\lambda}{2}\, f(0,t)\, \left(\dfrac{1-\expo^{-i\xi\varepsilon}}{|\xi|^2\,\varepsilon}+ \frac{\varepsilon}{2}\,(\hat{f}-1)\right),
\end{split}
\end{equation}
whence
\begin{equation}
\begin{split}
\partial_t \left(\dfrac{\left|\hat{f}_\varepsilon-\hat{f}\right|}{|\xi|^2}\right) + \dfrac{\lambda}{\varepsilon^2}\dfrac{\left|\hat{f}_\varepsilon-\hat{f}\right|}{|\xi|^2} \le& \dfrac{\lambda}{\varepsilon^2}\, \dfrac{\left|\hat{f}_\varepsilon-\hat{f}\right|}{|\xi|^2}
+\dfrac{\lambda}{2\,\varepsilon}\left(\dfrac{1}{2}\,\varepsilon^2\, (f(0,t)\,\hat{f}+f_\varepsilon(0,t)\,\hat{f}_\varepsilon) \right) \\ &\quad +\dfrac{3\,\lambda}{4}\, f(0,t)\,\varepsilon+\dfrac{\lambda}{2\,\varepsilon^2\,|\xi|^2}\,\left|\int_0^\varepsilon \expo^{-i\xi v}\,(1-\expo^{-i\xi\varepsilon})\,f_\varepsilon(v,t)\, \dd v\right|,
\end{split}
\end{equation}
where we have employed the Taylor expansion of $\expo^{\pm i\xi \varepsilon}$ and the fact that
\begin{equation}\label{inequality}
\frac{|\expo^{-i\alpha\xi}-1|}{|\xi|^s}=\sqrt{2\,\frac{1-\cos{(\alpha\,\xi)}}{|\xi|^{2s}}}\leq 2^{1-s}\,\alpha^s
\end{equation}
for every $\xi \in \R$, applied with $\alpha=\varepsilon$ and $s=2$.
If we assume that $f_\varepsilon(0,t)$ and $f(0,t)$ are uniformly bounded on $[0,T]$ (by $D$) for any pre-fixed $T > 0$, since $|\hat{f}| \le 1$ and $|\hat{f}_\varepsilon| \le 1$, we obtain
\begin{equation}
\begin{split}
\partial_t \left(\dfrac{\left|\hat{f}_\varepsilon-\hat{f}\right|}{|\xi|^2}\right) + \dfrac{\lambda}{\varepsilon^2}\,\dfrac{\left|\hat{f}_\varepsilon-\hat{f}\right|}{|\xi|^2} \le& \dfrac{\lambda }{\varepsilon^2}\, \dfrac{\left|\hat{f}_\varepsilon-\hat{f}\right|}{|\xi|^2} +\dfrac{5\,\lambda\, \varepsilon}{4}\,D\\
&+\dfrac{\lambda}{2\,\varepsilon^2\,|\xi|^2}\,\left|\int_0^\varepsilon \expo^{-i\xi v}\,(1-\expo^{-i\xi\varepsilon})\,f_\varepsilon(v,t) \, \dd v \right|.
\end{split}
\end{equation}
Coming to the last term, if we Taylor expand $f_\varepsilon$ about $0$, we have that
\begin{equation*}
\begin{split}
\dfrac{\lambda}{2\,\varepsilon^2\,|\xi|^2}\,\left|\int_0^\varepsilon \expo^{-i\xi v}\,(1-\expo^{-i\xi\varepsilon})\,f_\varepsilon(v,t) \, \dd v \right|&=\dfrac{\lambda}{2}\,\left|f_\varepsilon(0,t)+\varepsilon\,\partial_v f(\tilde{v}_\varepsilon,t)\right|\,\dfrac{|\expo^{-i\xi\varepsilon}-1|^2}{\varepsilon^2\,|\xi|^3} \\
& \le \dfrac{\lambda}{4}\,\left|f_\varepsilon(0,t) + \varepsilon\,\partial_v f_\varepsilon(\tilde{v}_\varepsilon,t)\right|\,\varepsilon,
\end{split}
\end{equation*}
where we have utilized~\eqref{inequality} twice with $s=1$ and $s=2$. 
 As $f_\varepsilon$ and $\partial_v f_\varepsilon$ are assumed to be uniformly bounded near the boundary, we conclude the existence of a generic positive constant $C>0$ such that
\begin{equation*}
\partial_t \left(\dfrac{\left|\hat{f}_\varepsilon-\hat{f}\right|}{|\xi|^2}\right) + \dfrac{\lambda}{\varepsilon^2}\,\dfrac{\left|\hat{f}_\varepsilon-\hat{f}\right|}{|\xi|^2} \le \dfrac{\lambda}{\varepsilon^2}\, \dfrac{\left|\hat{f}_\varepsilon-\hat{f}\right|}{|\xi|^2}+\lambda\, C\,\varepsilon.
\end{equation*}
Integrating in time over $[0,T]$ and taking the supremum over $\xi \in \R$, we arrive at
\[d_2(f_\varepsilon,f) \le d_2(f_\varepsilon^0,f^0)+ C\,\varepsilon\,T.\]
Letting $\varepsilon \to 0$ gives rise to
\[
\displaystyle \lim_{\varepsilon \to 0^+} d_2(f_\varepsilon,f) \le d_2(f_\varepsilon^0,f^0).
\]
Finally, as $f_\varepsilon^0=f^0$, the proof is completed.
\end{proof}
Analogous reasoning can be used to prove uniqueness of solutions to the nonlinear Fokker–Planck equation~\eqref{eq:mainPDE.bis}. Indeed, we have the following stability estimate: 
\begin{corollary}
Assume that $f$ and $g$ are two solutions of \eqref{eq:mainPDE.bis} departing from initial values $f^0 \in \mathcal{P}_3(\R_+)$ and $g^0 \in \mathcal{P}_3(\R_+)$, respectively. Assume also that $f^0$ and $g^0$ have the same mean value $\mu > 0$ so that their 2-Fourier distance $d_2(f^0, g^0)$ is finite. Then for any $t\geq 0$ it holds that
\[d_2\left(f(\cdot,t),g(\cdot,t)\right) \le d_2(f^0,g^0).\]
Consequently, the initial boundary value problem \eqref{eq:mainPDE.bis} admits a unique solution.
\end{corollary}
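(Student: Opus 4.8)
The plan is to run the very same Fourier-based argument used in the proof of Theorem~\ref{theo:q.i.-advection}, but now comparing the two Fokker--Planck solutions $f$ and $g$ directly rather than a Boltzmann solution against its limit. Writing $\partial_t \hat{f} = \hat{J}(\hat{f})$ and $\partial_t \hat{g} = \hat{J}(\hat{g})$ with $\hat{J}$ as in~\eqref{def:op.J.F}, I would subtract the two equations and divide by $|\xi|^2$ to obtain a closed evolution for the rescaled difference $u(\xi,t) \coloneqq (\hat{f}(\xi,t)-\hat{g}(\xi,t))/|\xi|^2$, whose supremum over $\xi$ is exactly $d_2(f(\cdot,t),g(\cdot,t))$. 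Since $f^0$ and $g^0$ share unit mass and the same mean $\mu$, both of which are conserved by Lemma~\ref{lemma:g.P2-advection}, the numerator $\hat{f}-\hat{g}$ vanishes to second order at $\xi=0$, so $u$ stays bounded and $d_2(f^0,g^0)<\infty$ as required.

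The heart of the matter is an energy-type estimate on $|u|^2$, exploiting the clean algebraic structure of $\hat{J}(\hat{f})-\hat{J}(\hat{g})$. After the splitting $f(0,t)(\hat{f}-1)-g(0,t)(\hat{g}-1) = g(0,t)(\hat{f}-\hat{g}) + (f(0,t)-g(0,t))(\hat{f}-1)$, the diffusive part of $\partial_t u$ produces the manifestly dissipative contribution $-\tfrac{\lambda}{2}|\xi|^2|u|^2$, whereas the advective part equals $\tfrac{\lambda}{2}\,i\xi\,g(0,t)\,u$, which is purely imaginary and therefore contributes nothing to $\tfrac{d}{dt}|u|^2$. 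This is precisely the mechanism forcing non-expansion. The only remaining term is the residual $\tfrac{\lambda}{2}\,(f(0,t)-g(0,t))\,\tfrac{i}{\xi}(\hat{f}-1)$, whose modulus I would control using $|\hat{f}(\xi)-1| = |\hat{f}(\xi)-\hat{f}(0)| \le \mu\,|\xi|$ (valid because $f$ has mean $\mu$), leading to an estimate of the form $\tfrac{d}{dt}\,d_2(f,g) \le \tfrac{\lambda}{2}\,\mu\,|f(0,t)-g(0,t)|$ once the damping term is discarded and the supremum over $\xi$ is taken.

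The main obstacle is exactly this boundary-value mismatch $|f(0,t)-g(0,t)|$: unlike the genuinely small $\mathcal{O}(\varepsilon)$ remainder of Theorem~\ref{theo:q.i.-advection}, it is not a priori controlled by the distance $d_2(f,g)$ itself. I would close the estimate by invoking the standing assumption that the boundary values are bounded, together with the regularity and decay of $f$ and $g$ near $v=0$, so as to obtain a trace-type bound $|f(0,t)-g(0,t)| \le C\,d_2(f(\cdot,t),g(\cdot,t))$; this converts the inequality into the Gr\"onwall form $\tfrac{d}{dt}\,d_2(f,g) \le C'\,d_2(f,g)$, with the sharp non-expansive bound $d_2(f(\cdot,t),g(\cdot,t)) \le d_2(f^0,g^0)$ recovered in the limiting case where the residual term drops out (equal boundary traces). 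For the uniqueness conclusion this is already enough: taking $f^0 = g^0$ gives $d_2(f^0,g^0)=0$, and Gr\"onwall then forces $d_2(f(\cdot,t),g(\cdot,t)) \equiv 0$ for all $t\ge 0$. Since $d_2(f,g)=0$ means $\hat{f}(\xi,t)=\hat{g}(\xi,t)$ for every $\xi$, and hence $f(\cdot,t)=g(\cdot,t)$ by injectivity of the Fourier transform, the initial boundary value problem~\eqref{eq:mainPDE.bis} admits at most one solution.
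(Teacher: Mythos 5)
Your reduction of $\hat{J}(\hat f)-\hat{J}(\hat g)$ is correct as far as it goes: the diffusive part is dissipative, the piece $\tfrac{\lambda}{2}\,i\xi\,g(0,t)(\hat f-\hat g)$ is indeed harmless for $|u|^2$, and the whole difficulty is concentrated in the residual $\tfrac{\lambda}{2}(f(0,t)-g(0,t))\,i\xi(\hat f-1)/|\xi|^2$, which your bound $|\hat f(\xi)-1|\le\mu|\xi|$ only controls by the $O(1)$ quantity $\tfrac{\lambda\mu}{2}|f(0,t)-g(0,t)|$. The step that closes your argument --- the trace-type bound $|f(0,t)-g(0,t)|\le C\,d_2(f(\cdot,t),g(\cdot,t))$ --- is a genuine gap: $d_2$ is a weak Fourier-based metric, and nothing in the paper (nor in your proposal) provides the uniform regularity needed to control a pointwise boundary value of the density by it. Moreover, even if such a bound were available, Gr\"onwall would only yield $d_2(f(\cdot,t),g(\cdot,t))\le d_2(f^0,g^0)\,\expo^{C't}$, not the stated non-expansive estimate; you acknowledge as much when you say the sharp bound is ``recovered'' only when the residual drops out, which is circular for the stability claim. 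The uniqueness conclusion would survive under your unproved trace bound (since $d_2(f^0,g^0)=0$ forces $d_2\equiv 0$ by Gr\"onwall), but as written neither assertion of the corollary is established.

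The paper takes a different and, for this purpose, essential detour: it inserts the Boltzmann operators $\pm\tfrac{1}{\varepsilon^2}\hat Q_\varepsilon(\hat f,\hat f)$ and $\pm\tfrac{1}{\varepsilon^2}\hat Q_\varepsilon(\hat g,\hat g)$ from \eqref{def:hat.Qeps} for an artificial parameter $\varepsilon>0$ and reruns the computation of Theorem~\ref{theo:q.i.-advection}. The point is that in the $\varepsilon$-regularized expression the boundary values $f(0,t)$ and $g(0,t)$ never appear as a difference divided by $\xi$; they appear separately, multiplied by the finite-difference factor $(\expo^{-i\xi\varepsilon}-1)/|\xi|^2$, which (via \eqref{inequality}) supplies an explicit power of $\varepsilon$, so each such term is bounded individually by $C\varepsilon$ using only the standing assumption that the boundary values are uniformly bounded. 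This gives $d_2(f(\cdot,t),g(\cdot,t))\le d_2(f^0,g^0)+C\varepsilon\,t$ for every $\varepsilon>0$, and letting $\varepsilon\to 0$ yields the claimed inequality without ever needing to compare $f(0,t)$ with $g(0,t)$. To repair your direct argument you would either have to prove the trace bound (which seems out of reach with the available estimates) or adopt this $\varepsilon$-insertion device.
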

\begin{proof}
We observe that
\[
\partial_t (\hat{f}-\hat{g})=\hat{J}(\hat{f}) - \dfrac{1}{\varepsilon^2}\hat{Q}_\varepsilon(\hat{f},\hat{f}) + \dfrac{1}{\varepsilon^2}\hat{Q}_\varepsilon(\hat{f},\hat{f}) - \dfrac{1}{\varepsilon^2}\hat{Q}_\varepsilon(\hat{g},\hat{g}) + \dfrac{1}{\varepsilon^2}\hat{Q}_\varepsilon(\hat{g},\hat{g}) -\hat{J}(\hat{g}),
\]
and following analogous computations as in the proof of Theorem \ref{theo:q.i.-advection} allows us to conclude that $d_2(f,g) \le d_2(f^0,g^0)$.
\end{proof}

\section{Convergence to Boltzmann-Gibbs distribution}\label{sec:sec3}
\setcounter{equation}{0}


\subsection{Entropy dissipation}\label{subsec:sec3.1}

Our main objective in this section is to prove that solutions $f(v,t)$ of the BDY PDE \eqref{eq:mainPDE} converge, in the large time limit $t \to \infty$, to its Boltzmann–Gibbs equilibrium distribution $f^\infty$ \eqref{eq:BG}. To start with, we show that the relative entropy
\begin{equation}\label{def:Shannon}
\cH\left[f \mid f^\infty\right](t) \coloneqq \int_0^\infty f(v,t)\,\ln \frac{f(v,t)}{f^\infty(v)}\,\dd v.
\end{equation}
serves as a Lyapunov functional for the evolution equation \eqref{eq:mainPDE}, which decreases monotonically in time.

\begin{proposition}\label{prop:entropy_dissipation}
Let $f$ be a solution to~\eqref{eq:mainPDE}. Under the settings of Lemma \ref{lemma:g.P2-advection}, for all $t\geq 0$ it holds that
\begin{equation}\label{eq:entropy_dissipation}
\frac{\dd}{\dd t} \cH\left[f \mid f^\infty\right] = -\cD[f] \leq 0,
\end{equation}
where \begin{equation}\label{eq:entropy_dissipation}
\cD[f] \coloneqq \int_0^\infty \frac{\left(\partial_v f(v,t) + f(0,t)\,f(v,t)\right)^2}{f(v,t)}\,\dd v.
\end{equation}
\end{proposition}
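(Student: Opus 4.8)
The plan is to carry out the standard entropy-dissipation computation, differentiating $\cH[f\mid f^\infty]$ in time and then integrating by parts using the flux form \eqref{eq:flux} of the equation. First I would write
\begin{equation*}
\frac{\dd}{\dd t}\cH[f\mid f^\infty] = \int_0^\infty \partial_t f(v,t)\left(\ln\frac{f(v,t)}{f^\infty(v)} + 1\right)\dd v = \int_0^\infty \partial_t f(v,t)\,\ln\frac{f(v,t)}{f^\infty(v)}\,\dd v,
\end{equation*}
where the constant term drops because $\int_0^\infty \partial_t f\,\dd v = 0$ by the mass conservation established in Lemma~\ref{lemma:g.P2-advection}. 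Differentiation under the integral sign and these manipulations are justified by the moment bounds and the boundary regularity assumed there.

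Next I would insert the flux form $\partial_t f = \partial_v \mathcal{F}[f]$ with $\mathcal{F}[f] = \partial_v f + f(0,t)\,f$ (taking $\lambda = 2$) and integrate by parts once. The boundary contribution at $v=0$ vanishes precisely because the Robin condition \eqref{eq:Robin-type-BC} is the no-flux condition $\mathcal{F}[f](0,t)=0$, while the term at infinity vanishes under the assumed decay of $f$ and $\partial_v f$. This yields
\begin{equation*}
\frac{\dd}{\dd t}\cH[f\mid f^\infty] = -\int_0^\infty \mathcal{F}[f]\,\partial_v\!\left(\ln\frac{f}{f^\infty}\right)\dd v.
\end{equation*}
Using the explicit Boltzmann–Gibbs profile $f^\infty(v) = \mu^{-1}\expo^{-v/\mu}$, one has $\partial_v \ln f^\infty = -1/\mu$, so that $\partial_v\ln(f/f^\infty) = \partial_v f/f + 1/\mu$.

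At this point the integrand reads $\mathcal{F}[f]\,(\partial_v f/f + 1/\mu)$, whereas the target dissipation $\cD[f]$ rewrites as $\int_0^\infty \mathcal{F}[f]\,(\partial_v f/f + f(0,t))\,\dd v$. The two differ by $\big(1/\mu - f(0,t)\big)\int_0^\infty \mathcal{F}[f]\,\dd v$, and the key observation—which I expect to be the crux of the argument—is that this correction vanishes: integrating $\mathcal{F}[f] = \partial_v f + f(0,t)\,f$ over $\R_+$ gives $[f]_0^\infty + f(0,t)\int_0^\infty f\,\dd v = -f(0,t) + f(0,t) = 0$, again by mass conservation and the decay of $f$. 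Since $1/\mu - f(0,t)$ depends only on $t$, the entire correction term is zero, whence $\frac{\dd}{\dd t}\cH = -\cD[f]$ exactly.

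Finally, since $f > 0$ the integrand of $\cD[f]$ is a pointwise square divided by a positive quantity, so $\cD[f] \ge 0$, giving the claimed monotone decay. The only genuinely delicate point is the mismatch between the natural logarithmic-derivative weight $1/\mu$ and the drift coefficient $f(0,t)$: everything hinges on absorbing this discrepancy through the vanishing of the total flux rather than on any pointwise identity, so I would make sure to isolate and justify that cancellation carefully.
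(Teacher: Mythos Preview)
Your argument is correct and is in fact a little cleaner than the paper's own computation. The paper does not use the flux form: it first invokes conservation of both mass \emph{and} mean to reduce $\frac{\dd}{\dd t}\cH[f\mid f^\infty]$ to $\frac{\dd}{\dd t}\int f\ln f\,\dd v$, then inserts the non-flux form $\partial_t f=\partial_{vv}f+f(0,t)\,\partial_v f$ and integrates each term by parts separately. This produces boundary contributions proportional to $\ln f(0,t)$, which cancel exactly through the Robin condition $\partial_v f(0,t)+f^2(0,t)=0$; what remains is $f(0,t)^2-\int_0^\infty|\partial_v f|^2/f\,\dd v$, which is then recognised as $-\cD[f]$ after expanding the square. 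Your route---flux form, a single integration by parts with the no-flux condition killing the boundary term, then the observation that $\int_0^\infty\mathcal F[f]\,\dd v=0$ absorbs the mismatch between $1/\mu$ and $f(0,t)$---isolates the essential cancellation more transparently and needs only mass conservation explicitly. Both approaches are standard entropy-dissipation calculations; yours trades a slightly longer boundary bookkeeping for a conceptual one-line cancellation, while the paper's version makes the intermediate identity $\frac{\dd}{\dd t}\cH=f(0,t)^2-\int|\partial_v f|^2/f\,\dd v$ visible, which is occasionally useful on its own.
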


\begin{proof}
Thanks to the conservation of the total probability mass and the mean value \eqref{eq:conservation_laws}, we deduce that
\begin{equation*}
\begin{aligned}
&\dfrac{d}{dt}\cH\left[f \mid f^\infty\right](t) = \frac{\dd}{\dd t} \int_0^\infty f(v,t)\,\ln f(v,t) \,\dd v = \int_0^\infty \partial_t f(v,t)\,\ln f(v,t) \,\dd v \\
&= \int_0^\infty \partial_{vv} f(v,t)\,\ln f(v,t)\,\dd v + f(0,t)\,\int_0^\infty \partial_v f(v,t)\,\ln f(v,t)\,\dd v \\
&= -\partial_v f(0,t)\,\ln f(0,t) - \int_0^\infty \frac{|\partial_v f(v,t)|^2}{f(v,t)}\,\dd v - f(0,t)^2\,\ln f(0,t) - f(0,t)\,\int_0^\infty \partial_v f(v,t)\,\dd v \\
&= f(0,t) - \int_0^\infty \frac{|\partial_v f(v,t)|^2}{f(v,t)}\,\dd v - \ln f(0,t)\cdot\underbrace{\left(\partial_v f(0,t) + f(0,t)^2\right)}_{=~0~\textrm{by boundary condition}} \\
&= f(0,t)^2 - \int_0^\infty \frac{|\partial_v f(v,t)|^2}{f(v,t)}\,\dd v = -\int_0^\infty \frac{\left(\partial_v f(v,t) + f(0,t)f(v,t)\right)^2}{f(v,t)} \, \dd v,
\end{aligned}
\end{equation*}
whence the proof is completed.
\end{proof}
We include here a numerical experiment illustrating the entropic decay of $f$ toward the Boltzmann-Gibbs distribution
$f^\infty$. Figure~\ref{fig:entropic_decay}-left displays the time evolution of the solution $f$ to the BDY PDE \eqref{eq:mainPDE},
initialized with the Gamma-type distribution $f(v,0) \coloneqq \frac{2}{\mu}\,\left(1-\frac{v}{2\,\mu}\right)^2\,\expo^{-v/\mu}$ with $\mu = 1$, which is consistent with the Robin-type boundary condition \eqref{eq:Robin-type-BC}. The corresponding decay of the relative entropy
$\cH\left[f \mid f^\infty\right]$ is shown in Figure~\ref{fig:entropic_decay}-right.

\begin{figure}[!htb]
  \begin{subfigure}{0.47\textwidth}
    \centering
    \includegraphics[scale=0.6]{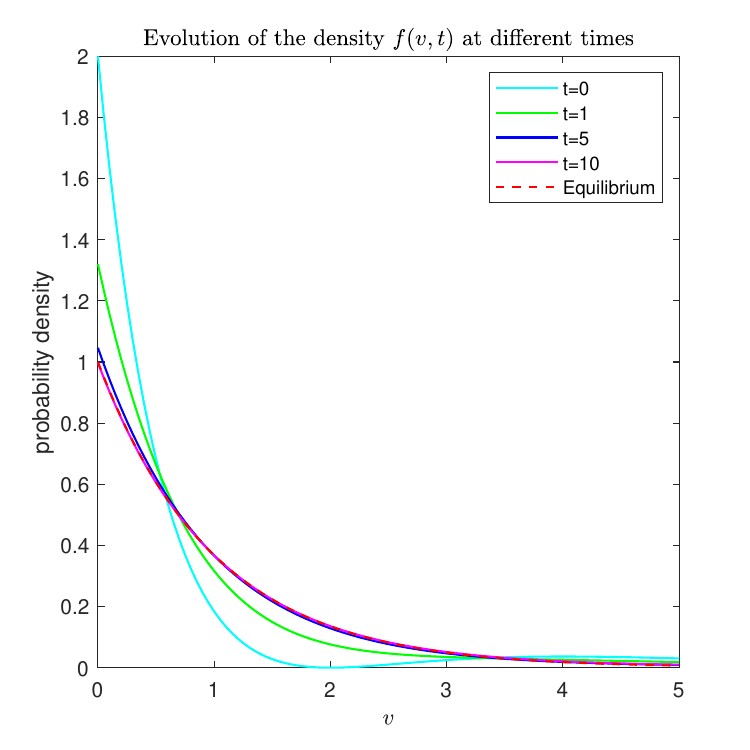}
  \end{subfigure}
  \hspace{0.1in}
  \begin{subfigure}{0.47\textwidth}
    \centering
    \includegraphics[scale=0.6]{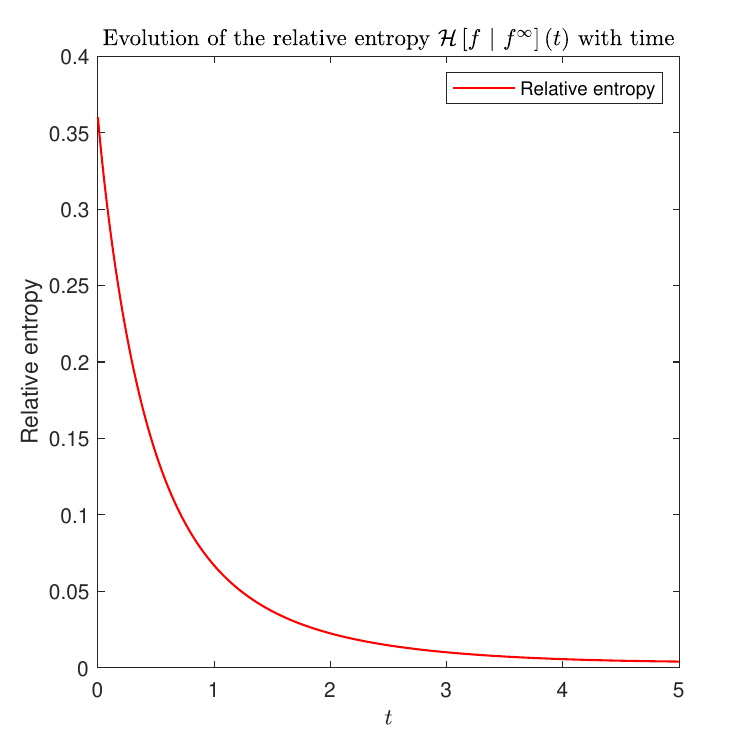}
  \end{subfigure}
  \caption{{\bf Left}: Simulation of the Bennati–Dragulescu–Yakovenko PDE problem \eqref{eq:mainPDE}. {\bf Right}: Evolution of the relative entropy $\cH\left[f \mid f^\infty\right](t)$.}
  \label{fig:entropic_decay}
\end{figure}

The computations in the proof of Proposition~\ref{prop:entropy_dissipation} can be generalized and carried out in a more systematic manner. Indeed, in the same spirit as~\cite{furioli_2017, torregrossa_wealth_2017}, we first reformulate the Fokker-Planck equation. For this purpose, we refer to its flux form~\eqref{eq:flux} with $\lambda=2$. If we consider the relation which defines the stationary state $f^\infty$, i.e., $\mathcal{F}[f^\infty]=0$, we obtain
\[\partial_v f^\infty(v) +f^\infty(0)f^\infty(v)= f^\infty(v) \left(\partial_v \ln f^\infty(v) +f^\infty(0)\right)=0,\]
which implies
\begin{equation}
f^\infty(0)=-\dfrac{\partial_v f^\infty(v)}{f^\infty(v)}=-\partial_v \ln f^\infty(v), \quad \forall v \in \R_+.
\end{equation}
In particular, we have that $f^\infty(0)=1/\mu$. Then we rewrite the flux $\mathcal{F}[f]$ as
\[
\begin{aligned}
\mathcal{F}[f](v,t)=& f(v,t) \left( \partial_v \ln f(v,t) + f(0,t) \right)\\
=& f(v,t) \left(\partial_v \ln f(v,t) -\partial_v\ln f^\infty(v)+f(0,t) - f^\infty(0)\right).
\end{aligned}
\]
We remark that the boundary conditions satisfied by $f^\infty$ and $f$ yield
\begin{equation}\label{eq:cb.2}
f^\infty(0)=- \dfrac{\partial_v f^\infty(0)}{f^\infty(0)} \quad \textrm{and} \quad f(0,t) = -\dfrac{\partial_v f(0,t)}{f(0,t)},
\end{equation}
leading us to
\[
f(0,t)-f^\infty(0)= -\dfrac{\partial_v f(0,t)}{f(0,t)}+\dfrac{\partial_v f^\infty(0)}{f^\infty(0)} = - \partial_v \ln \dfrac{f(v,t)}{f^\infty(v)}\Big|_{v=0}.
\]
As a consequence, an equivalent formulation of the problem~\eqref{eq:mainPDE} is
\begin{equation}\label{eq:FP2}
\partial_t f(v,t) = \partial_v \left[ f(v,t) \left( \partial_v \ln \dfrac{f(v,t)}{f^\infty(v)}-  \partial_v \ln \dfrac{f(v,t)}{f^\infty(v)}\Big|_{v=0}\right)\right].
\end{equation}
\begin{remark}
We observe that the boundary condition $\mathcal{F}[f](0,t)=0$ is automatically satisfied and does not actually need to be imposed. This is a consequence of the fact that the desired boundary condition is utilized in order to reformulate the Fokker-Planck equation~\eqref{eq:FP2}.
\end{remark}

Next, we define the ratio
\begin{equation}\label{def:F}
F(v,t) \coloneqq \dfrac{f(v,t)}{f^\infty(v)}.
\end{equation}
Inserting \eqref{def:F} into \eqref{eq:FP2} gives rise to
\begin{equation}\label{eq:F}
\begin{cases}
f^\infty(v)\,\partial_t F(v,t) = \partial_v \left[ f^\infty(v) \left(\partial_v F(v,t) -F(v,t) \Lambda(t)\right) \right], &~~v > 0,~t\geq 0\\
\partial_v F(v,t) = F(v,t)\,\Lambda(t), &~~v = 0,~t\geq 0,
\end{cases}
\end{equation}
where $\Lambda(t) \coloneqq \dfrac{\partial_v F(0,t)}{F(0,t)}$. Again, the boundary condition does not really need to be imposed as it is automatically satisfied at $v=0$ by the definition of $\Lambda$.  Therefore, the evolution of $F$ is governed by
\begin{equation}\label{eq:rF}
\partial_t F(v,t) = \partial_{vv} F(v,t) -\partial_v F(v,t)\left(\Lambda(t)+f^\infty(0)\right)+f^\infty(0)\,\Lambda(t)\, F(v,t),
\end{equation}
where we emphasize that the quantity $\Lambda=\Lambda(t)$ embodies the boundary condition. A routine computation shows that $\Lambda(t)$ can be rewritten as
\begin{equation}\label{def:Lambda}
\Lambda(t) = f^\infty(0) - f(0,t).
\end{equation}

\begin{proposition}\label{teo.1} 
Assume that $F$ is a solution of~\eqref{eq:rF}. Then if $\Psi \colon \R_+\to \R$ is a smooth function such that
\[\sup\limits_{v \in \R_+} |\Psi(v)| \leq c < \infty,\]
then the following relation holds:
\[\int_{\R_+} f^\infty(v)\,\Psi(v)\,\partial_t F(v,t)\, {\rm d}v = -\int_{\R_+} f^\infty(v)\, \partial_v \Psi(v)\left(\partial_v F(v,t)-\Lambda(t)\, F(v,t)\right) \, {\rm d}v.\]
\end{proposition}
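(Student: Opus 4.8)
The plan is to read off this identity as the weak, integrated-by-parts form of the divergence-structured equation~\eqref{eq:F} for $F$. The first line of~\eqref{eq:F} reads $f^\infty(v)\,\partial_t F = \partial_v\!\left[f^\infty(v)\,(\partial_v F - F\,\Lambda)\right]$, so the natural move is to multiply both sides by the test function $\Psi(v)$, integrate over $\R_+$, and transfer one derivative off the flux by integration by parts. The bulk term produced this way is already the desired right-hand side; the whole content of the proof is then to show that the resulting boundary contribution vanishes.

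First I would multiply~\eqref{eq:F} by $\Psi$ and integrate, obtaining $\int_{\R_+} f^\infty\,\Psi\,\partial_t F\,\dd v = \int_{\R_+}\Psi\,\partial_v\!\left[f^\infty(\partial_v F - F\,\Lambda)\right]\dd v$. A single integration by parts on the right-hand side yields the bulk term $-\int_{\R_+} f^\infty\,\partial_v\Psi\,(\partial_v F - \Lambda\,F)\,\dd v$ — exactly the claimed expression — together with a boundary contribution $\bigl[\Psi(v)\,f^\infty(v)\,(\partial_v F - \Lambda\,F)\bigr]_{v=0}^{v=\infty}$. It remains to verify that this boundary term is zero at both endpoints.

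At $v=0$ the term is killed by the (automatically satisfied) boundary condition in~\eqref{eq:F}, namely $\partial_v F(0,t) = \Lambda(t)\,F(0,t)$, so that $\partial_v F - \Lambda\,F$ vanishes at $v=0$ for every $t$. For the contribution at $v=\infty$ I would use the identity $f^\infty(v)\,(\partial_v F - \Lambda\,F) = \mathcal{F}[f](v,t) = \partial_v f(v,t) + f(0,t)\,f(v,t)$, which follows from $f = f^\infty F$, from $\partial_v\ln f^\infty = -f^\infty(0)$, and from the relation $\Lambda = f^\infty(0) - f(0,t)$ in~\eqref{def:Lambda}; this simply recognizes~\eqref{eq:F} as the flux form $\partial_t f = \partial_v\mathcal{F}[f]$ of~\eqref{eq:flux}. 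Since $\Psi$ is bounded by $c$, the boundary term at infinity is controlled by $c\,|\mathcal{F}[f](v,t)|$, which tends to $0$ as $v\to\infty$ under the rapid-decay assumptions already invoked in Section~\ref{subsec:sec2.2}, namely $f(v,t)\to 0$ and $\partial_v f(v,t)\to 0$ as $v\to\infty$.

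The only genuinely delicate point is this vanishing of the flux at infinity: one must ensure that the decay of $f$ and $\partial_v f$ guaranteed by smooth, rapidly decaying initial data persists for positive times, so that $\Psi\,\mathcal{F}[f]$ really does vanish at the upper endpoint and the improper integrals converge absolutely. The exponential decay of $f^\infty$ is helpful here, since $F$ need only grow sub-exponentially for both $f^\infty F$ and $f^\infty\,\partial_v F$ to decay; everything else reduces to the one-line integration by parts above.
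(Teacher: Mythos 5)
Your proposal is correct and follows essentially the same route as the paper: multiply \eqref{eq:F} by $\Psi$, integrate by parts once, and observe that the boundary contribution at $v=0$ vanishes because $\partial_v F(0,t)=\Lambda(t)\,F(0,t)$. The only difference is that you spell out the vanishing of the flux term at $v=\infty$ (via the decay of $f$ and $\partial_v f$), which the paper's proof leaves implicit.
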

\begin{proof}
We resort to the equivalent formulation~\eqref{eq:F} and perform integration by parts as follows:
\begin{align*}
\int_{\R_+} f^\infty(v)\,\Psi(v)\,\partial_t F(v,t)\, {\rm d}v &= \left[\Psi(v)\,f^\infty(v)\left(\partial_v F(v,t)-\Lambda(t)\, F(v,t)\right)\right]\vert_{v=0}\\
&\phantom{=}-\int_{\R_+}\partial_v \Psi(v)\, f^\infty(v) \left(\partial_v F(v,t)-\Lambda(t)\,F(v,t)\right)\,\dd v.
\end{align*}
Thus the advertised identity follows as the boundary term vanishes.
\end{proof}

\noindent We can now prove the following:
\begin{theorem}\label{Teo1} 
Assume that $\Phi \colon \R_+\to \R$ is a smooth and convex function. If $F(v,t)$ is the solution to~\eqref{eq:rF} and $c \leq F(v,t)\leq C$ for some positive constants $c < C$ and all $t\geq 0$, then the generalized entropy functional
\begin{equation}\label{def:Theta}
\Theta[F](t) \coloneqq \int_{\R_+} f^\infty(v)\,\Phi(F(v,t)) \, {\rm d}v
\end{equation}
has a time variation given by the following identity
\[\frac{d}{dt}\Theta[F](t) = -I_{\Theta}[F](t),\]
where $I_{\Theta}[F](t)$ is defined via
\begin{equation}\label{def:entropy.prod}
\begin{aligned}
I_{\Theta}[F](t) \coloneqq &\int_{\R_+} f^\infty(v)\, \Phi''(F(v,t))\,|\partial_v F(v,t)|^2 \, {\rm d} v\\
&-\Lambda(t) \int_{\R_+} f^\infty(v)\, \Phi''(F(v,t))\, F(v,t)\, \partial_v F(v,t) \, {\rm d} v,
\end{aligned}
\end{equation}
which can be viewed as the generalized entropy production functional.
\end{theorem}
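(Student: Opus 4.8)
The plan is to differentiate $\Theta$ under the integral sign and then invoke Proposition~\ref{teo.1} with a judiciously chosen test function. Concretely, since the weight $f^\infty(v)$ does not depend on $t$, I would first write
\[
\frac{\dd}{\dd t}\Theta[F](t) = \int_{\R_+} f^\infty(v)\,\partial_t\big(\Phi(F(v,t))\big)\,\dd v = \int_{\R_+} f^\infty(v)\,\Phi'(F(v,t))\,\partial_t F(v,t)\,\dd v,
\]
where the interchange of $\frac{\dd}{\dd t}$ and the integral is legitimate because the two-sided bound $c \le F \le C$ confines $F$ (hence $\Phi(F)$ and $\Phi'(F)$) to the compact interval $[c,C]$, on which the smooth function $\Phi$ and its derivatives are bounded, while $f^\infty$ is integrable.

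The key observation is that, at each fixed $t$, the map $v \mapsto \Psi(v) := \Phi'(F(v,t))$ is an admissible test function for Proposition~\ref{teo.1}: it is smooth in $v$, and the uniform bound $c \le F \le C$ yields $\sup_{v \in \R_+}|\Psi(v)| \le \sup_{s\in[c,C]}|\Phi'(s)| < \infty$. Applying Proposition~\ref{teo.1} with this choice, and using the chain rule $\partial_v \Psi = \Phi''(F)\,\partial_v F$, gives
\[
\int_{\R_+} f^\infty(v)\,\Phi'(F)\,\partial_t F\,\dd v = -\int_{\R_+} f^\infty(v)\,\Phi''(F(v,t))\,\partial_v F(v,t)\,\big(\partial_v F(v,t) - \Lambda(t)\,F(v,t)\big)\,\dd v.
\]
Finally, I would expand the product in the integrand into the two pieces $\Phi''(F)\,|\partial_v F|^2$ and $-\Lambda(t)\,\Phi''(F)\,F\,\partial_v F$, and recognize the resulting expression as precisely $-I_\Theta[F](t)$ from~\eqref{def:entropy.prod}. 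Combining this with the first display then delivers $\frac{\dd}{\dd t}\Theta[F] = -I_\Theta[F]$, as claimed.

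As for where the difficulty lies: the genuine analytic content — the integration by parts in the weighted space, together with the vanishing of the boundary term at $v=0$ (via the Robin relation $\partial_v F(0,t) = \Lambda(t)\,F(0,t)$) and at $v = \infty$ (via $f^\infty(v) \to 0$) — has already been absorbed into Proposition~\ref{teo.1}. Consequently the only points requiring care here are (i) justifying the differentiation under the integral sign, for which the two-sided bound $c \le F \le C$ is exactly the hypothesis needed, and (ii) checking that $\Phi'(F(\cdot,t))$ satisfies the boundedness requirement of Proposition~\ref{teo.1}, which again follows from the same bound. I therefore do not anticipate a substantive obstacle beyond this verification and routine bookkeeping; the bound $c \le F \le C$ is what makes both applications clean.
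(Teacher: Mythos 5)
Your proposal is correct and follows essentially the same route as the paper: the paper's proof also consists of applying Proposition~\ref{teo.1} with the test function $\Psi(v)=\Phi'(F(v,t))$ at fixed $t$, the chain rule then producing the two terms of \eqref{def:entropy.prod}. Your additional remarks on differentiating under the integral sign and on why the bound $c\leq F\leq C$ makes $\Phi'(F(\cdot,t))$ admissible are sensible elaborations of the same argument.
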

\begin{proof}
The relation~\eqref{def:entropy.prod} follows directly from Proposition~\ref{teo.1} by choosing $\Psi(v)=\Phi'(F(v,t))$ for a fixed $t\geq 0$.
\end{proof}
\begin{remark}
The hypothesis of a uniform bound on $F$ is of course a strong assumption. It is due to the fact that a general result concerning the maximum principle for a general nonlinear Fokker-Planck equation is still missing, and~\eqref{eq:mainPDE} does not fall in the rather general case treated by Le Bris and Lions in~\cite{lebris_lions_2008}. The presence of $f(0,t)$ in the drift term does not allow to conclude by considering a maximum principle for uniformly parabolic differential equations. We emphasize here that this issue also underlies the hypothesis of uniform bounds on both $f_\varepsilon, f$ and their derivatives in a neighborhood on $\varepsilon$ in the previous section.
\end{remark}
Differently with respect to~\cite{furioli_2017}, the quantity $I_{\Theta}$ is not identically positive. In fact, the first term, which also appears in~\cite{furioli_2017}, is positive due to the convexity assumption on $\Phi$. However, the second term is a new contribution and does not always have a fixed sign. This indefiniteness is mainly due to the sign of $\Lambda$. If $\Lambda \equiv 0$ then we would go back to the original case, as $f(0,t)$ in equation~\eqref{eq:mainPDE} would not be time dependent. This is the key difference with respect to~\cite{furioli_2017}, which arises directly in the derivation of the evolution equation for $F$. In fact, the Fokker–Planck equation investigated in \cite{furioli_2017} is linear with constant-in-time diffusion and drift coefficients, and it contains no nonlinear term involving $f(0,t)$. As a consequence, the authors in~\cite{furioli_2017} do not need to exploit the boundary conditions~\eqref{eq:cb.2} in the reformulation of the flux term $\mathcal{F}[f]$ in order to write the equivalent Fokker-Planck equation~\eqref{eq:FP2}. As already mentioned, this also underlies the fact that boundary condition associated with \eqref{eq:FP2} is automatically satisfied.

In summary, we cannot conclude by Theorem \ref{Teo1} that the functional $\Theta$ is monotonically decreasing in time. As a consequence, we now analyze different kinds of relative entropy functionals.
First, we consider the relative entropy~\eqref{def:Shannon}, which is obtained in the aforementioned framework by setting $\Phi(F)=F\ln F$ in~\eqref{def:Theta}. As a consequence,
\[
I_{\Theta}[F](t) = \int_{\R_+} f^\infty(v)\, \dfrac{1}{F(v,t)}\,|\partial_v F(v,t)|^2 \, \dd v - \Lambda(t) \int_{\R_+} f^\infty(v)\,\partial_v F(v,t) \, \dd v.
\]
The first term is obviously non-negative, and it can also be rewritten as 
\[
\int_{\R_+} f^\infty(v)\, \dfrac{1}{F(v,t)}\,|\partial_v F|^2 \, \dd v = \int_{\R_+} f(v,t) \left( \dfrac{\partial_v f(v,t)}{f(v,t)}-\dfrac{\partial_v f^\infty(v)}{f^\infty(v)}\right)^2 \, \dd v,
\]
in which the right-hand side is the Fisher information of $f$ relative to $f^\infty$. On the other hand, the second term can be rewritten as
\begin{align*}
\Lambda(t) \int_{\R_+} f^\infty(v)\,\partial_v F(v,t) \, \dd v &= \Lambda(t) \left[ (f^\infty(v)\,F(v,t))\vert_{v=0}^\infty -\int_{\R_+} \partial_v f^\infty(v)\, F(v,t) \, \dd v \right]\\
& = \Lambda(t) \left[ -f(0,t)-\int_{\R_+} \dfrac{\partial_v f^\infty(v)}{f^\infty(v)}\,f(v,t) \, \dd v\right] \\
&= \Lambda(t) \left[-f(0,t)+f^\infty(0) \right] = -(f(0,t)-f^\infty(0))^2,
\end{align*}
where in the second equality we have employed the boundary condition for $f^\infty(0)$ and the fact that $f$ has a constant unitary mass.
As a consequence, $I_{\Theta}[F]$ is clearly non-negative.
Moreover, by virtue of \eqref{eq:cb.2}, we have
\[
(f(0,t)-f^\infty(0))^2 = \left(\dfrac{\partial_v f^\infty(0)}{f^\infty(0)}-\dfrac{\partial_v f(0,t)}{f(0,t)}\right)^2 = \int_{\R_+} f(v,t) \left(\dfrac{\partial_v f^\infty(0)}{f^\infty(0)}-\dfrac{\partial_v f(0,t)}{f(0,t)}\right)^2 \, \dd v.
\]

\noindent In conclusion, we end up with
\begin{equation*}
I_{\Theta}[F](t) = \displaystyle\int_{\R_+}f(v,t) \left[\left( \dfrac{\partial_v f(v,t)}{f(v,t)}-\dfrac{\partial_v f^\infty(v)}{f^\infty(v)}\right)^2+\left( \dfrac{\partial_v f(0,t)}{f(0,t)}-\dfrac{\partial_v f^\infty(0)}{f^\infty(0)}\right)^2\right] \, \dd v.
\end{equation*}
If $\Lambda \equiv 0$, the entropy production induced by the relative entropy boils down to the Fisher information of $f$ relative to $f^\infty$, which is defined for two smooth probability densities $f_1$ and $f_2$ by
\begin{equation}\label{def:Fisher}
I(f_1,f_2) \coloneqq \int_{\R_+}f_1(v) \left( \dfrac{\partial_v f_1(v)}{f_1(v)}-\dfrac{\partial_v f_2(v)}{f_2(v)}\right)^2 \, \dd v.
\end{equation}
In general, we introduce the following generalized relative Fisher information:
\begin{definition}
Let $f_1$ and $f_2$ be two smooth probability densities, and let $\Lambda$ be defined by~\eqref{def:Lambda} (with $f$ and $f^\infty$ replaced by $f_1$ and $f_2$, respectively). Then, the Fisher information of $f_1$ relative to $f_2$ with boundary term $\Lambda$ is defined by
\begin{equation}\label{def:Fisher.Lambda}
I_\Lambda(f_1,f_2) \coloneqq \displaystyle\int_{\R_+}f_1(v) \left[\left( \dfrac{\partial_v f_1(v)}{f_1(v)}-\dfrac{\partial_v f_2(v)}{f_2(v)}\right)^2+\left( \dfrac{\partial_v f_1(0)}{f_1(0)}-\dfrac{\partial_v f_2(0)}{f_2(0)}\right)^2\right] \dd v
\end{equation}
\end{definition}
As a consequence, we have that
\begin{equation}\label{eq:Shannon.Fisher}
\dfrac{\dd}{\dd t} \cH\left[f \mid f^\infty\right](t) = -I_{\Lambda(t)}\left(f(\cdot,t),f^\infty\right),
\end{equation}
where
\begin{equation}\label{eq:Shannon.Lambda}
I_{\Lambda(t)}\left(f(\cdot,t),f^\infty\right) = I\left(f(\cdot,t),f^\infty\right) + \Lambda^2(t).
\end{equation}




\subsection{Convergence in $L^1$}\label{subsec:sec3.2}

While it is not banal to study the $L^2$ convergence, it is possible to employ classical arguments for the $L^1$ convergence~\cite{furioli_2017}.
To this aim, let us now introduce the Hellinger distance.
\begin{definition}
For any pair of nonnegative functions $f_1$ and $f_2$ defined on $\R_+$, the Hellinger distance is defined by
\begin{equation}
\dd_H(f_1,f_2) \coloneqq \left( \int_{\R_+} \left(\sqrt{f_1(v)}-\sqrt{f_2(v)}\right)^2 \, \dd v\right)^{1/2}.
\end{equation}
\end{definition}
The Hellinger distance can be recovered by setting $\Phi(x)=(\sqrt{x}-1)^2$ in~\eqref{def:Theta}. Unfortunately, the sign of~\eqref{def:entropy.prod} is not immediate to determine, and thus the monotonicity of the Hellinger distance remains unclear. From~\eqref{eq:Shannon.Fisher}, we obtain
\begin{equation}\label{eq:H.I1}
\int_0^t \dfrac{\dd}{\dd \tau} \cH\left[f \mid f^\infty\right](\tau)\,\dd \tau = -\int_0^t I_\Lambda (f(\cdot,\tau),f^\infty) \, \dd \tau = -\int_0^t I(f(\cdot,\tau),f^\infty) \, \dd \tau -\int_0^t \Lambda^2(\tau) \, \dd \tau.
\end{equation}
Given the initial condition $f^0$, it follows that
\[
\cH\left[f \mid f^\infty\right](t)-\cH\left[f^0 \mid f^\infty\right] = -\int_0^t I(f(\cdot,\tau),f^\infty) \, \dd \tau -\int_0^t \Lambda^2(\tau) \, \dd \tau,
\]
whence for all $t\geq 0$ it holds that
\begin{equation}\label{eq:I.L1}
\int_0^t I(f(\cdot,\tau),f^\infty) \, \dd \tau \le \cH\left[f^0 \mid f^\infty\right].
\end{equation}
As a result, the relative Fisher information $I(f(\cdot,\tau),f^\infty)$ is an $L^1(\R_+)$ function of time. Consequently there is at least a diverging sequence of times $\lbrace \tau_k\rbrace$ for which
\begin{equation}\label{conv:Fisher}
\displaystyle \lim_{k\to \infty} I(f(\cdot,\tau_k),f^\infty) = 0.
\end{equation}
We now exploit the well-known inequality which relates the Fisher information to the Hellinger distance, which was first proved by Johnson and Barron \cite{johnson_fisher_2004}, who made use of the Chernoff inequality coupled with the Hellinger distance (see also \cite{furioli_2017} for relevant discussions):
\begin{equation}\label{Fisher.Hellinger}
I(f(\cdot,\tau),f^\infty) \ge \dd^2_H(f,f^\infty), \quad \forall \tau  >0.
\end{equation}
Then from~\eqref{conv:Fisher}, we have convergence, up to a subsequence, of the Hellinger distance:
\begin{equation}\label{conv:Hellinger}
\displaystyle \lim_{k\to \infty} \dd_H(f(\cdot,\tau_k),f^\infty) = 0.
\end{equation}
Therefore, exploiting the fact that (see for instance \cite{torregrossa_wealth_2017})
\[
||f_1 - f_2 ||_{L^1(\R_+)} \le 2\,\dd^2_H (f_1,f_2),
\]
we can conclude that
\begin{equation}\label{conv:L1}
\displaystyle \lim_{k\to \infty} ||f(\cdot,\tau_k)- f^\infty ||_{L^1(\R_+)} =0.
\end{equation}
In summary, we have proved the following:
\begin{theorem}
Let $f(\cdot, t)$ be the solution to the BDY PDE \eqref{eq:mainPDE}, issuing from an initial value $f^0 \in \cP_1(\R_+)$ such that the relative entropy $\cH\left[f^0 \mid f^\infty\right]$ is finite. Then, there exists at least a subsequence of diverging times $\lbrace \tau_k \rbrace$ such that both the Hellinger and the $L^1$ distance between $f$ and $f^\infty$ converge to zero. In other words, both \eqref{conv:Fisher} and \eqref{conv:L1} hold true.
\end{theorem}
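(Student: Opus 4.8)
The plan is to integrate the entropy dissipation identity in time, extract a subsequence along which the relative Fisher information vanishes, and then descend from the Fisher information to the Hellinger distance and finally to $L^1$.

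First I would start from the entropy identity \eqref{eq:Shannon.Fisher}, $\frac{\dd}{\dd t}\cH\left[f\mid f^\infty\right](t) = -I_{\Lambda(t)}\left(f(\cdot,t),f^\infty\right)$, together with its decomposition \eqref{eq:Shannon.Lambda}, namely $I_{\Lambda(t)} = I(f(\cdot,t),f^\infty) + \Lambda^2(t)$. Integrating over $[0,t]$ and discarding the two nonnegative contributions $\cH\left[f\mid f^\infty\right](t)\geq 0$ and $\int_0^t \Lambda^2(\tau)\,\dd\tau \geq 0$ yields the uniform-in-time bound \eqref{eq:I.L1}, that is $\int_0^t I(f(\cdot,\tau),f^\infty)\,\dd\tau \leq \cH\left[f^0\mid f^\infty\right]$. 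The hypothesis that the initial relative entropy is finite is precisely what renders the right-hand side finite, so the nonnegative map $\tau\mapsto I(f(\cdot,\tau),f^\infty)$ is integrable on $[0,\infty)$.

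Next, since integrability of a nonnegative function on the half-line forces its $\liminf$ at infinity to vanish, there must exist a diverging sequence $\{\tau_k\}$ with $I(f(\cdot,\tau_k),f^\infty)\to 0$, which is exactly \eqref{conv:Fisher}. I would then invoke the Johnson--Barron inequality \eqref{Fisher.Hellinger}, $I(f,f^\infty)\geq \dd^2_H(f,f^\infty)$, to transfer this to Hellinger convergence \eqref{conv:Hellinger}, and close with the elementary comparison $||f_1-f_2||_{L^1(\R_+)}\leq 2\,\dd^2_H(f_1,f_2)$ to obtain the advertised $L^1$ convergence \eqref{conv:L1} along the same subsequence.

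The essential limitation---rather than a technical obstacle in the chain above, which is entirely routine---is that this strategy delivers convergence only along a subsequence. To upgrade the time-integrability of $I(f(\cdot,\tau),f^\infty)$ to genuine decay for every diverging time, one would need a coercivity estimate bounding the dissipation below by the relative entropy itself (a logarithmic-Sobolev-type inequality tailored to this nonlinear boundary-value problem). The indefinite term $\Lambda^2(t)$ in the production and, more fundamentally, the nonlinear dependence on $f(0,t)$ in the drift---which obstructs the maximum principle and hence any uniform positive lower bound on the density, as flagged in the remark following Theorem~\ref{Teo1}---are exactly what prevent a full-convergence statement.
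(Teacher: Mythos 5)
Your proposal is correct and follows essentially the same route as the paper: integrating the entropy identity \eqref{eq:Shannon.Fisher}--\eqref{eq:Shannon.Lambda} to obtain the time-integrability bound \eqref{eq:I.L1}, extracting a diverging sequence along which the relative Fisher information vanishes, and then descending via the Johnson--Barron inequality \eqref{Fisher.Hellinger} and the comparison $\|f_1-f_2\|_{L^1(\R_+)}\le 2\,\dd_H^2(f_1,f_2)$ to the Hellinger and $L^1$ convergences. Your closing remarks on the obstruction to full (non-subsequential) convergence also match the paper's own discussion following the theorem.
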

In the linear setting, even in the case of a non-constant diffusion (but with a linear drift), it is possible to prove the decay of the Hellinger distance (whence of the $L^1$ distance) in time, without the restriction of a subsequence \cite{furioli_2017}. The analysis in the linear case, essentially corresponds to study of the problem~\eqref{eq:Shannon.Fisher}-\eqref{eq:Shannon.Lambda} with $\Lambda \equiv 0$, relies on the inequality~\eqref{Fisher.Hellinger} and on the monotonicity of the Hellinger distance. In fact,~\eqref{Fisher.Hellinger} allows us to deduce that
\[
\int_0^t \dd^2_H(f(\cdot, \tau),f^\infty) \, \dd \tau  \leq \cH\left[f^0 \mid f^\infty\right],
\]
which implies the fact that $\dd_H^2$ is $L^1(\R_+)$ as a function of time. It is straightforward to verify that this property continues to hold in our setting for the entropy decay~\eqref{eq:Shannon.Fisher}-\eqref{eq:Shannon.Lambda}, even when $\Lambda$ does not vanish. In contrast, establishing the monotonicity of the Hellinger distance is far from trivial: the presence of the time-dependent term $\Lambda$, whose sign may change over time, prevents a direct argument.

On the other hand, following the same procedures starting from~\eqref{eq:H.I1}, it is possible to prove the analogues relation to~\eqref{eq:I.L1} for the component of the generalized Fisher information $I_\Lambda$~\eqref{eq:Shannon.Lambda} which involves the boundary term (i.e., $\Lambda^2(t)$). In particular, we have that
\begin{equation*}
\int_0^t \Lambda^2(\tau) \, \dd \tau \le \cH\left[f^0 \mid f^\infty\right]
\end{equation*}
holds for all $t\geq 0$. Consequently, the quantity $\Lambda^2(\tau)$ is an $L^1(\R_+)$ function of time. Thus there exists at least a diverging sequence of time $\lbrace \tau_k\rbrace$ for which
\begin{equation*}
\displaystyle \lim_{k\to \infty} \Lambda^2(\tau_k) = 0.
\end{equation*}
In conclusion, we have proved that there is convergence, at least up to subsequences, of the boundary value:
\begin{corollary}
Let $f(\cdot, t)$ be the solution to the BDY PDE \eqref{eq:mainPDE}, issuing from an initial value $f^0 \in \cP_1(\R_+)$ such that the relative entropy $\cH\left[f^0 \mid f^\infty\right]$ is finite. Then, there is at least a subsequence of diverging times $\lbrace \tau_k \rbrace$ such that
\begin{equation*}
\displaystyle \lim_{k\to \infty} f(0,\tau_k) = f^\infty(0).
\end{equation*}
\end{corollary}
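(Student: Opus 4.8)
The plan is to read off the desired convergence directly from the Lyapunov structure established in \eqref{eq:Shannon.Fisher}--\eqref{eq:Shannon.Lambda}, by isolating the time-integrability of the boundary contribution $\Lambda^2(t)$.

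First I would integrate the entropy identity \eqref{eq:Shannon.Fisher} over $[0,t]$ and insert the decomposition \eqref{eq:Shannon.Lambda}, obtaining
\[
\cH[f\mid f^\infty](t) - \cH[f^0\mid f^\infty] = -\int_0^t I(f(\cdot,\tau),f^\infty)\,\dd\tau - \int_0^t \Lambda^2(\tau)\,\dd\tau.
\]
Since the relative entropy $\cH[f\mid f^\infty](t)$ and the relative Fisher information $I(f(\cdot,\tau),f^\infty)$ are both nonnegative, discarding them yields the uniform-in-$t$ bound $\int_0^t \Lambda^2(\tau)\,\dd\tau \le \cH[f^0\mid f^\infty]$, whose right-hand side is finite by hypothesis. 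This is precisely the estimate recorded just above the statement.

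Next, letting $t \to \infty$ shows that $\Lambda^2(\cdot) \in L^1(\R_+)$ as a function of time. From this I would extract a diverging sequence $\{\tau_k\}$ with $\Lambda^2(\tau_k)\to 0$ by the elementary observation that a nonnegative, time-integrable function must have $\liminf_{\tau\to\infty}$ equal to zero: were it bounded below by some $\delta > 0$ for all large $\tau$, the integral would diverge, contradicting the bound.

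Finally, I would translate this back to the boundary value through the explicit identity \eqref{def:Lambda}, $\Lambda(t) = f^\infty(0) - f(0,t)$, so that $\Lambda^2(\tau_k)\to 0$ reads $|f^\infty(0) - f(0,\tau_k)|^2 \to 0$, i.e.\ $f(0,\tau_k)\to f^\infty(0)$. I do not expect a genuine obstacle: the whole argument is a by-product of the entropy dissipation already proved, combined with a routine subsequence extraction. The only point deserving care is that the entropy identity is applied with $\cH[f^0\mid f^\infty]$ finite and with the nonnegative splitting \eqref{eq:Shannon.Lambda} valid, both of which are guaranteed by the standing hypotheses of Lemma \ref{lemma:g.P2-advection} and the assumption that the initial relative entropy is finite.
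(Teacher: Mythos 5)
Your proposal is correct and follows essentially the same route as the paper: both integrate the entropy identity \eqref{eq:Shannon.Fisher} with the splitting \eqref{eq:Shannon.Lambda}, discard the nonnegative relative entropy and Fisher information to obtain $\int_0^t \Lambda^2(\tau)\,\dd\tau \le \cH[f^0\mid f^\infty]$, deduce that $\Lambda^2$ is integrable in time, extract a diverging sequence along which it vanishes, and conclude via $\Lambda(t)=f^\infty(0)-f(0,t)$. The only difference is that you spell out the elementary $\liminf$ argument for the subsequence extraction, which the paper leaves implicit.
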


\subsection{Linearization around equilibrium}\label{subsec:sec3.3}

We perform a linearization analysis around the Boltzmann–Gibbs equilibrium distribution $f^\infty$ for solutions of the BDY PDE \eqref{eq:mainPDE}, and establish an explicit rate of convergence within the linearized (weighted $L^2$) framework. To this aim, we assume the following ansatz:
\begin{equation*}
f(v,t) = f^\infty(v) + \varepsilon\,r(v,t) \quad \textrm{with $|\varepsilon| \ll 1$},
\end{equation*}
we have $\partial_v f(v,t) = \partial_v f^\infty(v) + \varepsilon\,\partial_v r(v,t)$, $\partial_{vv} f(v,t) = \partial_{vv} f^\infty(v) + \varepsilon\,\partial_{vv} r(v,t)$, and $f(0,t) = f^\infty(0) + \varepsilon\,r(0,t)$. In addition, Lemma \ref{lemma:g.P2-advection} guarantees that
\[\int_0^\infty r(v,t)\,\dd v = 0 \quad \textrm{and} \quad \int_0^\infty v\,r(v,t)\,\dd v = 0~~\textrm{for all $t\geq 0$}.\]
Taking the limit as $\varepsilon \to 0$ yields the following linearized PDE from the BDY PDE problem \eqref{eq:mainPDE}:
\begin{equation}\label{eq:linearPDE}
\begin{cases}
\partial_t r(v,t) = \partial_{vv} r(v,t) + f^\infty(0)\,\partial_v r(v,t) + r(0,t)\,\partial_v f^\infty(v), &~~ v > 0,~t\geq 0,\\
\partial_v r(v,t) + 2\,f^\infty(0)\,r(v,t) = 0, &~~ v = 0,~t\geq 0.
\end{cases}
\end{equation}
For the linearized equation \eqref{eq:linearPDE}, the natural entropy/energy is the $L^2(f^{-1}_\infty)$ norm of $r(\cdot,t)$ defined by
\begin{equation*}\label{eq:weighted_L2}
\mathcal{E}[r(\cdot,t)] \coloneqq \frac 12\,\int_0^\infty \frac{r^2(v,t)}{f^\infty(v)}\,\dd v.
\end{equation*}
Our main goal in this section is to establish the following quantitative convergence guarantee satisfied by the solution of the linearized problem \eqref{eq:linearPDE}:
\begin{theorem}\label{thm:expo_conv}
Assume that $r(v,t)$ is a classical solution of the linear PDE \eqref{eq:linearPDE}. Then for all $t\geq 0$ it holds that
\begin{equation*}
\mathcal{E}[r(\cdot,t)] \leq \mathcal{E}[r(0,t)]\,\expo^{-\frac{t}{6\,\mu^2}}.
\end{equation*}
\end{theorem}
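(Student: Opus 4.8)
The plan is to differentiate the weighted energy $\mathcal{E}[r(\cdot,t)]$ in time, use the linearized equation \eqref{eq:linearPDE} to obtain an energy-dissipation identity, and then bound the dissipation below by a multiple of $\mathcal{E}$ itself, yielding a Gr\"onwall-type differential inequality $\frac{\dd}{\dd t}\mathcal{E} \leq -\frac{1}{3\mu^2}\,\mathcal{E}$ (recalling $f^\infty(0) = 1/\mu$), which integrates to the claimed exponential decay with rate $1/(6\mu^2)$.

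\textbf{Step 1: the energy identity.} First I would compute $\frac{\dd}{\dd t}\mathcal{E}[r(\cdot,t)] = \int_0^\infty \frac{r\,\partial_t r}{f^\infty}\,\dd v$ and substitute the evolution equation for $\partial_t r$. It is cleaner to work with the variable $h(v,t) \coloneqq r(v,t)/f^\infty(v)$, since $\mathcal{E} = \frac12\int_0^\infty f^\infty\,h^2\,\dd v$ and the linearized operator is self-adjoint in $L^2(f^\infty)$; rewriting \eqref{eq:linearPDE} in terms of $h$ should reveal the natural symmetric structure. Then I would integrate by parts, carefully tracking the boundary contributions at $v=0$ (the terms at $v=\infty$ vanish by decay of $f^\infty$ and $r$). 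The Robin boundary condition $\partial_v r(0,t) + 2\,f^\infty(0)\,r(0,t) = 0$ together with $f^\infty(0) = 1/\mu$ must be used precisely here to eliminate or control the boundary term, and the zero-mass/zero-mean constraints $\int_0^\infty r\,\dd v = \int_0^\infty v\,r\,\dd v = 0$ will be needed to discard the nonlocal source term $r(0,t)\,\partial_v f^\infty$ or to absorb it favorably.

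\textbf{Step 2: coercivity / the spectral gap.} The core analytic step is to show that the dissipation functional obtained in Step 1 controls $\mathcal{E}$ from above, i.e. a weighted Poincar\'e-type inequality of the form $\int_0^\infty f^\infty\,|\partial_v h|^2\,\dd v \geq \kappa\int_0^\infty f^\infty\,h^2\,\dd v$ for the admissible class of $h$ satisfying the two orthogonality constraints. Since $f^\infty(v) = \frac{1}{\mu}\expo^{-v/\mu}$ is (a rescaling of) the exponential measure, the relevant operator is a Laguerre-type diffusion whose spectrum on $\R_+$ is explicitly known; after normalizing $v \mapsto v/\mu$, the eigenvalues are integers and the first nontrivial one, once the mass and mean constraints remove the two lowest modes, gives the gap $\kappa$. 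Matching this to the stated rate $1/(6\mu^2)$ is the quantitative heart of the argument.

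\textbf{The main obstacle} I anticipate is the boundary term at $v=0$ in Step 1: unlike a pure Neumann or periodic problem, the Robin condition here carries the factor $2\,f^\infty(0)$ (rather than $f^\infty(0)$, which is what the flux form of the nonlinear PDE would naively suggest), and this mismatch means the boundary contribution does not simply cancel — it must be shown to have a favorable (dissipative) sign or to combine with the bulk Dirichlet-form term to preserve coercivity. A secondary subtlety is the nonlocal source term $r(0,t)\,\partial_v f^\infty(v)$, which is not in divergence form; I would handle it by pairing it against $h$, integrating by parts using $\partial_v f^\infty = -f^\infty/\mu$, and invoking the constraint $\int_0^\infty r\,\dd v = 0$ to show its contribution is either nonpositive or dominated by the spectral-gap margin. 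Getting the constants to align so that the combined rate is exactly $1/(6\mu^2)$ — rather than merely some positive rate — is where the careful bookkeeping will matter most.
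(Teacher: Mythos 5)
Your overall strategy (energy identity plus coercivity, then Gr\"onwall) is the same as the paper's, and you correctly flag the two danger points; but the proposal has genuine gaps precisely at those points. First, the boundary term. After integrating by parts and using the Robin condition $\partial_v r(0,t) = -2\,f^\infty(0)\,r(0,t)$, the identity one actually obtains is $\frac{\dd}{\dd t}\mathcal{E}[r(\cdot,t)] = 2\,r^2(0,t) - \int_0^\infty |\partial_v r(v,t)|^2/f^\infty(v)\,\dd v$: the boundary contribution is strictly \emph{positive}, i.e.\ anti-dissipative, so neither of your hoped-for outcomes (favorable sign, or cancellation) occurs. The missing idea is a trace inequality, $r^2(0) \leq \frac13 \int_0^\infty |\partial_v r|^2/f^\infty\,\dd v$ (Lemma \ref{lem:L1}), proved by writing $r(0) = \int_0^\infty (1+\alpha v+\beta v^2)\,r'(v)\,\dd v$ --- legitimate only because of \emph{both} constraints $\int_0^\infty r\,\dd v = \int_0^\infty v\,r\,\dd v = 0$ --- then applying Cauchy--Schwarz against the weight $f^\infty$ and optimizing over $(\alpha,\beta)$ to get the sharp constant $\frac13$. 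Without an inequality of this type the positive boundary term cannot be absorbed at all, and with any constant larger than $\frac12$ the argument would fail; the value $\frac13$ is what ultimately produces the rate $\frac{1}{6\mu^2}$. Your proposal gestures at ``combining with the bulk Dirichlet form'' but supplies no mechanism, and in particular does not identify that the zero-mean constraint (not just the zero-mass one) is indispensable here.

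Second, the spectral picture in your Step 2 is not correct. The linearized operator has \emph{constant} diffusion with exponential invariant measure; it is not a Laguerre diffusion (that would require the degenerate diffusion coefficient $v$), and its symmetrization has no discrete spectrum of integer eigenvalues --- its essential spectrum begins at $1/(4\mu^2)$. The paper instead proves the needed Poincar\'e inequality $\int_0^\infty r^2/f^\infty\,\dd v \leq 4\mu^2 \int_0^\infty |r'|^2/f^\infty\,\dd v$ (Lemma \ref{lem:L2}) by an elementary double-integral and Cauchy--Schwarz computation, with no orthogonality constraints needed. Combining the two lemmas gives $\frac{\dd}{\dd t}\mathcal{E} \leq -\frac13\int_0^\infty |r'|^2/f^\infty\,\dd v \leq -\frac{1}{6\mu^2}\,\mathcal{E}$. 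Note finally that your stated differential inequality $\frac{\dd}{\dd t}\mathcal{E}\leq -\frac{1}{3\mu^2}\,\mathcal{E}$ is inconsistent with the claimed decay $\expo^{-t/(6\mu^2)}$; the correct coefficient is $\frac{1}{6\mu^2}$.
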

The remainder of this section is devoted to the proof of Theorem \ref{thm:expo_conv}. To facilitate the presentation, we first establish two preliminary lemmas, which may also be of independent interest.

\begin{lemma}\label{lem:L1}
Assume that $r \colon \mathbb{R}_+ \to \mathbb R$ is continuously differentiable on $[0,\infty)$ and satisfies $\int_0^\infty r(v)\, \dd v = 0$ and $\int_0^\infty v\,r(v)\, \dd v = 0$. Then
\begin{equation}\label{eq:E1}
r^2(0) \leq \frac 13\,\int_0^\infty \frac{|r'(v)|^2}{f^\infty(v)}\,\dd v.
\end{equation}
\end{lemma}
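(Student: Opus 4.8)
The plan is to represent $r(0)$ as a single weighted integral of $r'$ that absorbs \emph{both} moment constraints simultaneously, and then to apply the Cauchy--Schwarz inequality in the weighted space carrying the weight $1/f^\infty(v)=\mu\,\expo^{v/\mu}$. Since \eqref{eq:E1} is trivially true when $\int_0^\infty |r'(v)|^2/f^\infty(v)\,\dd v=+\infty$, I may assume this quantity is finite. Under this assumption I first record a decay estimate: by Cauchy--Schwarz on the tail,
\[
\int_v^\infty |r'(s)|\,\dd s \le \left(\int_v^\infty \frac{|r'(s)|^2}{f^\infty(s)}\,\dd s\right)^{1/2}\left(\int_v^\infty f^\infty(s)\,\dd s\right)^{1/2}\le C\,\expo^{-v/(2\mu)},
\]
so $r'\in L^1(\R_+)$, hence $r$ has a limit at infinity, which must be $0$ since $r\in L^1$; consequently $|r(v)|\le C\,\expo^{-v/(2\mu)}$. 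This exponential decay guarantees that all boundary contributions $v\,r(v)$ and $v^2\,r(v)$ vanish as $v\to\infty$, which is the only genuinely delicate point in the argument.

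Next I would recast the two hypotheses in terms of $r'$. Integrating by parts (the boundary terms at $0$ and $\infty$ vanish by the decay just established), the constraint $\int_0^\infty r(v)\,\dd v=0$ becomes $\int_0^\infty v\,r'(v)\,\dd v=0$, and $\int_0^\infty v\,r(v)\,\dd v=0$ becomes $\int_0^\infty v^2\,r'(v)\,\dd v=0$; moreover $r(0)=-\int_0^\infty r'(v)\,\dd v$. Combining these, for \emph{any} real constants $a,b$ the terms proportional to $a$ and $b$ drop out, so that
\[
r(0)=-\int_0^\infty \bigl(1-a\,v-b\,v^2\bigr)\,r'(v)\,\dd v .
\]
This is the crucial observation: the two moment conditions allow me to replace the constant test function $1$ by an arbitrary quadratic $p(v):=1-a v-b v^2$ without changing the value $r(0)$.

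The second step is to choose $p$ optimally and apply Cauchy--Schwarz: writing $r'=\bigl(r'/\sqrt{f^\infty}\bigr)\sqrt{f^\infty}$ yields
\[
r(0)^2\le \left(\int_0^\infty p(v)^2\,f^\infty(v)\,\dd v\right)\left(\int_0^\infty \frac{|r'(v)|^2}{f^\infty(v)}\,\dd v\right).
\]
It then remains to minimize the prefactor $\int_0^\infty p(v)^2 f^\infty(v)\,\dd v$ over $a,b$. Using the elementary moments $\int_0^\infty v^n f^\infty(v)\,\dd v=\mu^n\,n!$, the normal equations give the minimizer $a=1/\mu$, $b=-1/(6\mu^2)$, i.e. $p(v)=1-v/\mu+v^2/(6\mu^2)$, and a direct moment computation shows $\int_0^\infty p(v)^2 f^\infty(v)\,\dd v=\tfrac13$. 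Substituting this value into the Cauchy--Schwarz bound yields exactly \eqref{eq:E1}.

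The moment integral producing the constant $\tfrac13$ is routine and I would not grind through it in detail; the main obstacle is instead the rigorous justification that the boundary terms vanish, which is why I isolate the exponential-decay estimate at the very beginning. I note in passing that this value is sharp, since Cauchy--Schwarz is saturated by $r'\propto p\,f^\infty$, so $\tfrac13$ is the best constant compatible with the two constraints.
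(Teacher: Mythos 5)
Your proof is correct and follows essentially the same route as the paper: writing $r(0)$ as $\pm\int_0^\infty\bigl(1+\alpha v+\beta v^2\bigr)\,r'(v)\,\dd v$ via the two moment constraints, applying Cauchy--Schwarz with weight $1/f^\infty$, and minimizing the resulting quadratic prefactor over $\alpha,\beta$ to obtain the sharp constant $\tfrac13$. The only addition is that you rigorously justify the vanishing of the boundary terms at infinity through an exponential decay estimate, a point the paper's proof leaves implicit.
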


\begin{proof}
For any $\alpha,\beta \in \mathbb R$, the integral constraints on $r$ together with the Cauchy-Schwarz inequality imply that
\begin{equation*}
\begin{aligned}
r^2(0) &= \left(\int_0^\infty \left(1+\alpha\,v+\beta\,v^2\right)\,r'(v)\,\dd v\right)^2 \\
&\leq \int_0^\infty \frac{|r'(v)|^2}{f^\infty(v)}\,\dd v\,\int_0^\infty f^\infty(v)\,\left(1+\alpha\,v+\beta\,v^2\right)^2\,\dd v.
\end{aligned}
\end{equation*}
Let $g(\alpha,\beta) \coloneqq \int_0^\infty f^\infty(v)\,\left(1+\alpha\,v+\beta\,v^2\right)^2\,\dd v$. Explicit evaluations of the moments of the Boltzmann-Gibbs distribution $f^\infty$ \eqref{eq:BG} yield that
\begin{equation*}
\begin{aligned}
g(\alpha,\beta) &= \int_0^\infty \left(1+2\,\alpha\,v+(\alpha^2+2\,\beta)\,v^2+2\,\alpha\,\beta\,v^3+\beta^2\,v^4\right)\,f^\infty(v)\,\dd v \\
&= 1 + 2\,\alpha\,\mu^{-1} + 2\,(\alpha^2+2\,\beta)\,\mu^{-2} + 12\,\alpha\,\beta\,\mu^{-3} + 24\,\beta^2\,\mu^{-4}.
\end{aligned}
\end{equation*}
Elementary calculus shows that the function $g$ admits a unique critical point at $(\alpha,\beta) = \left(-\mu,\frac{\mu^2}{6}\right)$, and moreover:
\[\min_{(\alpha,\beta)\in \mathbb{R}^2} g(\alpha,\beta) = g\left(-\mu,\frac{\mu^2}{6}\right) = \frac 13.\]
This leads to the advertised bound \eqref{eq:E1}.
\end{proof}

\begin{lemma}[Poincar\'e-type inequality]\label{lem:L2}
Assume that $r \colon \mathbb{R}_+ \to \mathbb R$ is continuously differentiable on $[0,\infty)$. Then
\begin{equation}\label{eq:E2}
\int_0^\infty \frac{|r(v)|^2}{f^\infty(v)}\,\dd v \leq 4\,\mu^2\,\int_0^\infty \frac{|r'(v)|^2}{f^\infty(v)}\,\dd v.
\end{equation}
\end{lemma}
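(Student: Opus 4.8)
The inequality \eqref{eq:E2} is a weighted Poincar\'e inequality on the half-line with the exponential weight $1/f^\infty(v) = \mu\,\expo^{v/\mu}$, and the plan is to exploit the fact that this weight is, up to the constant $\mu$, its own antiderivative: since $f^\infty(v) = \expo^{-v/\mu}/\mu$, one checks directly that $\big(1/f^\infty\big)'(v) = \tfrac{1}{\mu}\,\big(1/f^\infty\big)(v)$, so that $1/f^\infty = \mu\,\big(1/f^\infty\big)'$. Writing $L \coloneqq \int_0^\infty r^2/f^\infty\,\dd v$ for the left-hand side and $R \coloneqq \int_0^\infty |r'|^2/f^\infty\,\dd v$ for the integral appearing on the right, the goal is to reduce the claim $L \le 4\,\mu^2\,R$ to a scalar quadratic inequality in $\sqrt{L}$ and $\sqrt{R}$ by a single integration by parts followed by Cauchy--Schwarz.

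Concretely, I would first substitute $1/f^\infty = \mu\,(1/f^\infty)'$ inside $L$ and integrate by parts, which yields
\[ L = \mu\left[\frac{r^2(v)}{f^\infty(v)}\right]_{0}^{\infty} - 2\,\mu\int_0^\infty r(v)\,r'(v)\,\frac{1}{f^\infty(v)}\,\dd v. \]
The boundary term at $v=0$ equals $-\mu^2\,r^2(0)$ (recall $f^\infty(0) = 1/\mu$) and is therefore favorable: it carries a nonpositive sign and may simply be discarded when passing to an inequality. Granting that the boundary term at $v=+\infty$ vanishes, I am left with $L \le 2\,\mu\int_0^\infty |r|\,|r'|/f^\infty\,\dd v$. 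Splitting the weight symmetrically as $1/f^\infty = (1/\sqrt{f^\infty})\cdot(1/\sqrt{f^\infty})$ and applying Cauchy--Schwarz gives $L \le 2\,\mu\,\sqrt{L}\,\sqrt{R}$, whence $\sqrt{L} \le 2\,\mu\,\sqrt{R}$ and finally $L \le 4\,\mu^2\,R$, which is exactly \eqref{eq:E2}.

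The one genuinely delicate point --- and the step I expect to be the main obstacle --- is the vanishing of the boundary term $\mu\,r^2(v)/f^\infty(v)$ as $v\to\infty$. For an arbitrary $\mathcal{C}^1$ function this need not hold (for instance a nonzero constant $r$ makes $L = +\infty$ while $R = 0$), so the statement must be read under the implicit integrability/decay that renders the weighted energy $\mathcal{E}[r]$ finite. To make the argument rigorous I would carry out the integration by parts on a truncated interval $[0,a_k]$ and then let $a_k\to\infty$ along a sequence chosen so that $r^2(a_k)/f^\infty(a_k)\to 0$; such a sequence exists precisely because finiteness of $L = \int_0^\infty r^2/f^\infty\,\dd v$ forces $\liminf_{v\to\infty} r^2(v)/f^\infty(v) = 0$. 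The nonpositive boundary term at $v=0$ and the discarded term survive this limiting procedure unchanged, so the chain of inequalities above goes through verbatim.

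Finally, I would record an equivalent and perhaps more transparent route via the substitution $u \coloneqq r/\sqrt{f^\infty}$: a direct computation gives $|r'|^2/f^\infty = \big(u' - \tfrac{1}{2\mu}\,u\big)^2$ and $r^2/f^\infty = u^2$, so that expanding the square and integrating the cross term by parts produces a nonnegative boundary contribution $\tfrac{1}{2\mu}\,u^2(0)$ together with the manifestly nonnegative $\int_0^\infty |u'|^2\,\dd v$; discarding both leaves exactly $R \ge \tfrac{1}{4\mu^2}\,L$. This completing-the-square viewpoint isolates the sharp constant $4\mu^2$ and makes clear that it originates from the single parameter $1/(2\mu)$ in the logarithmic derivative of $f^\infty$.
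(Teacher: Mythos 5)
Your argument is correct, and it reaches the same pivotal estimate as the paper — namely $\int_0^\infty r^2/f^\infty\,\dd v \le 2\mu \int_0^\infty |r|\,|r'|/f^\infty\,\dd v$ followed by the identical Cauchy--Schwarz step yielding $\sqrt{L}\le 2\mu\sqrt{R}$ — but by a genuinely different manipulation. The paper writes $r(v) = -\int_v^\infty r'(y)\,\dd y$, expands the square as a double integral, applies Fubini to produce the kernel $\int_0^{\min(z,w)} 1/f^\infty(v)\,\dd v = \mu^2\bigl(\expo^{\min(z,w)/\mu}-1\bigr)$, and symmetrizes; you instead perform a single integration by parts against the identity $1/f^\infty = \mu\,(1/f^\infty)'$, observe that the boundary contribution at $v=0$ is the favorable term $-\mu^2 r^2(0)\le 0$, and discard it. The two routes are morally equivalent (Fubini on the symmetric double integral is a disguised integration by parts), but yours is shorter and makes the sign structure at the boundary explicit. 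You are also more careful than the paper about the behavior at $v=+\infty$: the paper's representation $r(v)=-\int_v^\infty r'$ silently assumes $r(\infty)=0$ (which, as you note, is genuinely needed — a nonzero constant violates the stated inequality — and is guaranteed in the application since $R<\infty$ forces $r'\in L^1(\R_+)$ and finiteness of the weighted energy forces the limit to vanish), whereas your truncation-and-subsequence argument handles this point cleanly. Your closing completing-the-square reformulation via $u=r/\sqrt{f^\infty}$ is a valid third route and nicely explains the provenance of the constant $4\mu^2$; none of these refinements conflicts with the paper's statement or its use in Theorem \ref{thm:expo_conv}.
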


\begin{proof}
We notice that
\begin{align*}
&\int_0^\infty \frac{|r(v)|^2}{f^\infty(v)}\,\dd v = \int_0^\infty \frac{1}{f^\infty(v)}\left(\int_v^\infty r'(y)\,\dd y\right)^2\,\dd v \\
&= \int_0^\infty \frac{1}{f^\infty(v)}\,\int_{z\geq v} r'(z)\,\dd z\,\int_{w\geq v} r'(w)\,\dd w\,\dd v \\
&= \int_{z\geq 0} \int_{w\geq 0} r'(z)\,r'(w)\,\int_0^{\min(z,w)} \frac{1}{f^\infty(v)} \,\dd v\,\dd z\,\dd w \\
&= \int_{z\geq 0} \int_{w\geq 0} r'(z)\,r'(w)\,\mu^2\,\left(\expo^{\frac{\min(z,w)}{\mu}}-1\right)\dd z\,\dd w \\
&= 2\,\mu^2\,\int_{z\geq 0} r'(z)\,\left(\expo^{\frac{z}{\mu}}-1\right)\,\underbrace{\int_{w\geq z} r'(w)\,\dd w}_{=~-r(z)}\,\dd z \\
&\leq 2\,\mu^2\,\int_{z\geq 0} |r'(z)|\,|r(z)|\,\expo^{\frac{z}{\mu}}\,\dd z = 2\,\mu\,\int_{z\geq 0} \frac{|r'(z)|}{\sqrt{f^\infty(z)}}\,\frac{|r(z)|}{\sqrt{f^\infty(z)}}\,\dd z \\
&\leq 2\,\mu\,\left(\int_0^\infty \frac{|r'(z)|^2}{f^\infty(z)}\,\dd z\right)^{\frac 12}\,\left(\int_0^\infty \frac{|r(z)|^2}{f^\infty(z)}\,\dd z\right)^{\frac 12},
\end{align*}
from which the claimed bound \eqref{eq:E2} follows immediately.
\end{proof}

\noindent We now have all the necessary ingredients to present the proof of Theorem \ref{thm:expo_conv}. \\

\noindent{\bf Proof of Theorem \ref{thm:expo_conv}~:}~ The evolution of $\mathcal{E}[r(\cdot,t)]$ is dictated by
\begin{equation}\label{eq:start}
\begin{aligned}
\frac{\dd}{\dd t} \mathcal{E}[r(\cdot,t)] &= \int_0^\infty \frac{\partial_{vv} r(v,t) + f^\infty(0)\,\partial_v r(v,t) + r(0,t)\,\partial_v f^\infty(v)}{f^\infty(v)}\,r(v,t)\,\dd v \\
&= \int_0^\infty \frac{\partial_{vv} r(v,t) + f^\infty(0)\,\partial_v r(v,t)}{f^\infty(v)}\,r(v,t)\,\dd v,
\end{aligned}
\end{equation}
where the last identity follows from the relation $\dfrac{\partial_v f^\infty(v)}{f^\infty(v)} = -\mu^{-1}$ for all $v \geq 0$,
together with the fact that $\int_0^\infty r(v,t) \,\dd v = 0$ for all $t \geq 0$. Next, we observe that
\begin{equation}\label{eq:middle}
\begin{aligned}
&\int_0^\infty \frac{\partial_{vv} r(v,t)}{f^\infty(v)}\,r(v,t)\,\dd v = -\frac{r(0,t)}{f^\infty(0)}\,\partial_v r(0,t) - \int_0^\infty \partial_v r(v,t)\,\partial_v \left(\frac{r(v,t)}{f^\infty(v)}\right)\,\dd v \\
&= -\frac{r(0,t)}{f^\infty(0)}\,\partial_v r(0,t) - \int_0^\infty \frac{|\partial_v r(v,t)|^2}{f^\infty(v)}\,\dd v - \int_0^\infty \frac{f^\infty(0)\,\partial_v r(v,t)}{f^\infty(v)}\,r(v,t)\,\dd v.
\end{aligned}
\end{equation}
Inserting \eqref{eq:middle} into \eqref{eq:start} and invoking the boundary condition \eqref{eq:linearPDE}, we obtain
\begin{equation*}\label{eq:evolution_of_energy}
\frac{\dd}{\dd t} \mathcal{E}[r(\cdot,t)] = -\frac{r(0,t)}{f^\infty(0)}\,\partial_v r(0,t) - \int_0^\infty \frac{|\partial_v r(v,t)|^2}{f^\infty(v)}\,\dd v = 2\,r^2(0,t) - \int_0^\infty \frac{|\partial_v r(v,t)|^2}{f^\infty(v)}\,\dd v.
\end{equation*}
Applying Lemma \ref{lem:L1} and Lemma \ref{lem:L2} in succession, we end up with the differential inequality $\frac{\dd}{\dd t} \mathcal{E}[r(\cdot,t)] \leq -\frac{1}{6\,\mu^2}\,\mathcal{E}[r(\cdot,t)]$, thus the proof of Theorem \ref{thm:expo_conv} is completed by a standard application of Gronwall's inequality. \qed

\section{Conclusion}\label{sec:sec4}
\setcounter{equation}{0}

In this paper, we derived a continuous version of the Bennati–Dragulescu–Yakovenko (BDY) money exchange model, originally formulated on the discrete state space of non-negative integers. Although the BDY model is one of the earliest and most influential frameworks in the econophysics literature and forms the basis for a wide variety of subsequent generalizations, there has (to the best of our knowledge) been no systematic derivation of its counterpart in a continuous wealth space, where admissible wealth values range over $\mathbb{R}_+$. Developing such a formulation is essential for connecting the classical BDY dynamics with tools from nonlinear PDEs, kinetic theory, and continuous mean-field descriptions.

Somewhat unexpectedly, the quasi-invariant limit procedure leads us to the formulation of a nonlinear Fokker–Planck type PDE~\eqref{eq:mainPDE} on $\R_+$, 
supplemented with a nonlinear Robin-type boundary condition which ensures the conservation of total mass and average wealth. The equation features a constant diffusion coefficient and a nonlinear drift term which is the boundary value reflecting the underlying microscopic exchange mechanism. We further demonstrated that this PDE inherits several qualitative properties of the original BDY process at the mean-field level. In particular and to some extents, its evolution parallels the mean-field ODE system associated with the discrete model and preserves several key structural features of the exchange dynamics.

Finally, we proved that the solution of the PDE problem~\eqref{eq:mainPDE}, which exists and is unique, converges (along a subsequence of diverging times) to its unique equilibrium distribution characterized by the classical Boltzmann–Gibbs (exponential) law, which remarkably is the same for both the collision-like model and the Fokker-Planck equation. This has been proved by means of entropy arguments, which posed some new challenges. This establishes a rigorous bridge between the discrete stochastic BDY model and a continuous deterministic description, providing a unified PDE framework for analyzing more complex extensions of wealth exchange dynamics, such as the one with probabilistic cheaters, or the rich/poor-biased ones.

Our work also leaves a number of compelling open problems for future investigation. A first natural question is whether one can obtain several desired a priori estimates on the magnitude of $f$ and $\partial_v f$ near the boundary. In particular, it would be desirable to show that both $f$ and $\partial_v f$ are uniformly bounded in time for all $t \in [0,T]$ and for any fixed $T>0$. A more delicate problem concerns the derivation of a quantitative entropy--entropy dissipation inequality. Specifically, is it possible to control the relative entropy $\cH\left[f \mid f^\infty\right]$ by a suitable function of the entropy dissipation functional $\cD[f]$? Establishing such a logarithmic Sobolev-type inequality (should it hold) would yield a fully quantitative convergence rate to equilibrium in terms of the relative entropy, thus strengthening our large-time asymptotic results. Last but not least, the collision-like kinetic equation does not fall into the class of kinetic models for linear welath exchange which has been rigorously analyzed~\cite{torregrossa_wealth_2017}, and then requires itself non-trivial investigations.\\

\noindent {\bf Acknowledgement~} Fei Cao gratefully acknowledges support from an AMS-Simons Travel Grant, administered by the American Mathematical Society with funding from the Simons Foundation. Nadia Loy is member of GNFM-INdAM.

\end{document}